\documentclass[11pt]{amsart}
\usepackage{graphicx}              
\usepackage{amsmath}              
\usepackage{amsfonts}              
\usepackage{amsthm}                
\usepackage{color}
\usepackage{tabulary}
\usepackage{caption}
\usepackage{subcaption}
\usepackage{amssymb}
\usepackage{todonotes}
\theoremstyle{plain}
\usepackage{mathtools}

\theoremstyle{plain}
\newtheorem{thm}{Theorem}
\newtheorem*{cor}{Corollary}
\newtheorem{theorem}{Theorem}[section]

\newtheorem{proposition}[theorem]{Proposition}
\newtheorem{lemma}[theorem]{Lemma}

\newtheorem{corollary}[theorem]{Corollary}
\theoremstyle{definition}

\newtheorem{remark}[theorem]{Remark}

\newcommand{\nc}{\newcommand}
\nc{\dmo}{\DeclareMathOperator}

\nc{\Q}{\mathbb{Q}}
\nc{\R}{\mathbb{R}}
\nc{\Z}{\mathbb{Z}}
\nc{\N}{\mathbb{N}}
\nc{\C}{\mathbb{C}}
\nc{\cS}{\mathcal{S}}
\nc{\iso}{\cong}
\dmo{\Mod}{Mod}
\dmo{\Diff}{Diff}
\dmo{\Homeo}{Homeo}
\dmo{\dist}{dist}
\dmo\BDiff{BDiff}
\dmo\SO{SO}
\dmo\slide{sl}
\dmo\im{im}
\dmo\id{id}
\dmo\Fix{Fix}
\dmo\Out{Out}
\dmo{\T}{\mathcal{T}}
\dmo{\Te}{\mathcal{T}^{\epsilon}}
\dmo{\M}{\mathcal{M}}
\dmo{\Me}{\mathcal{M}^{\epsilon}}

\renewcommand{\epsilon}{\varepsilon}
\nc{\coloneq}{\mathrel{\mathop:}\mkern-1.2mu=}
\nc{\margin}[1]{\marginpar{\scriptsize #1}}
\nc{\para}[1]{\bigskip\noindent\textbf{#1}}


\newcommand{\cal}{\mathcal}

\begin{document}

\title[Small eigenvalues and thick-thin decomposition]
{Small eigenvalues and thick-thin decomposition in negative curvature}
\author{Ursula Hamenst\"{a}dt}

\thanks
{AMS subject classification: 58J50,53C20\\
Keywords: Spectre du Laplacien, conditions de bord Neumann,
vari\'et\'es de courbure n\'egative pinc\'ee, d\'ecomposition
en parties \'epaisse et fine. \\
Spectrum of the Laplacian, Neumann boundary conditions, 
manifolds of pinched negative curvature, 
thick-thin decomposition\\
Research
supported by ERC grant ``Moduli''}
\date{November 14, 2019}

\begin{abstract}
Soit $M$ une vari\'et\'e Riemannienne compl\`{e}te orient\'ee,
de dimension $n\geq 3$ et de volume finie. Supposons que la
courbure de $M$ soit contenue dans $[-b^2,-1]$, et soit
$M=M_{\rm thick}\cup M_{\rm thin}$ la d\'ecomposition 
en sa partie \'epaisse et sa partie fine. Soit $\lambda_k(M)$ la 
$k$-ti\`{e}me valeur
propre de l'op\'erateur Laplacien, avec conditions de bord de Neumann. 
Nous d\'emontrons que 
$\lambda_k(M_{\rm thick})/3\leq \lambda_k(M)$ pour tout $k$ tel que
$\lambda_k(M)<(n-2)^2/12$. 
Si $M$ est hyperbolique et de dimension 3, alors $\lambda_k(M)\leq 
C\log({\rm vol}(M_{\rm thin})+2)\lambda_k(M_{\rm thick})$ pour un nombre
$C>0$ fix\'e pourvu que
$\lambda_k(M_{\rm thick})<1/96$.

\bigskip

Let $M$ be a finite volume oriented complete Riemannian
manifold of dimension $n\geq 3$ and curvature in $[-b^2,-1]$, 
with thick-thin decomposition $M=M_{\rm thick}\cup M_{\rm thin}$.
Denote by $\lambda_k(M_{\rm thick})$ 
the $k$-th eigenvalue for the Laplacian on $M_{\rm thick}$, with 
Neumann boundary conditions. We show  
that $\lambda_k(M_{\rm thick})/3\leq \lambda_k(M)$ for all $k$ for which 
$\lambda_k(M)<(n-2)^2/12$. 
If $M$ is hyperbolic and of dimension 3 then $\lambda_k(M)\leq 
C\log({\rm vol}(M_{\rm thin})+2)\lambda_k(M_{\rm thick})$ for a fixed
number $C>0$ 
provided that $\lambda_k(M_{\rm thick})<1/96$.
\end{abstract}

\maketitle


\section{Introduction}

The small part of the spectrum of the Laplace operator $\Delta$ acting on 
functions on a closed oriented 
hyperbolic surface $S$ is quite well understood. Namely,
if $g$ denotes the genus of $S$ then the $2g-3$-th eigenvalue 
$\lambda_{2g-3}(S)$ can be arbitrarily
small \cite{Bu}, while $\lambda_{2g-2}(S)>\frac{1}{4}$ \cite{OR09}.

By the Gauss-Bonnet theorem, the volume of $S$ equals $2\pi(2g-2)$, so
these results relate the small part of the spectrum of $S$ to its volume.

For $n\geq 3$, the small part of the spectrum of an oriented  
finite volume Riemannian 
manifold $M$ of dimension $n$ and 
sectional curvature $\kappa\in [-b^2,-1|$ for some $b\geq 1$ 
is less well understood. 
The manifold $M$ admits a \emph{thick-thin decomposition}
$M=M_{\rm thick}\cup M_{\rm thin}$ 
which is determined as follows. There exists a constant
$c=c(n)>0$ such that $\epsilon=b^{-1}c(n)$ is a \emph{Margulis constant} 
for $M$, and 
$M_{\rm thick}$ is the set of all points in $M$ of injectivity radius 
at least $\epsilon$. 
Its complement $M_{\rm thin}$ is a disjoint union of
so-called \emph{Margulis tubes} and \emph{cusps}. 
Here a Margulis tube is a tubular neighborhood of a closed geodesic 
of length at most $2\epsilon$, and a cusp is homeomorphic to the quotient of 
a horoball in the universal covering $\tilde M$ of $M$ 
by a rank $n-1$ parabolic subgroup of the isometry group of $\tilde M$. 
After a small modification, 
we may assume that
$M_{\rm thick}=M-M_{\rm thin}$ is a submanifold of $M$ with smooth boundary
\cite{BCD93}.

Unlike in the case of surfaces, 
the submanifold $M_{\rm thick}$ is always connected, and
this prevents the occurence of very small eigenvalues. 
More precisely, for every $n\geq 3$, 
Schoen \cite{S82} established the existence of a universal and explicit 
constant $\theta=\theta(n,b)>0$ such that
\begin{equation}\label{lower}\lambda_1(M)\geq 
\frac{\theta}{{\rm vol}(M)^2}\end{equation}
for \emph{every} closed $n$-manifold $M$ with curvature in 
$[-b^2,-1]$ (here in contrast to the work of Schoen, we 
normalize the metric on $M$ so that the upper bound 
of the curvature is fixed). That 
this estimate extends without
change to non-compact finite volume manifolds
was established in \cite{DR86,D87}.

For hyperbolic manifolds of dimension $n=3$, this bound is roughly sharp:  
White \cite{White} 
proved that for fixed $r \geq 2, \delta > 0$ 
there exists a constant 
 $a=a(r,\delta)>0$ such that 
 \[ \lambda_1(M)\in [1/a {\rm vol}(M)^2, a/{\rm vol}(M)^2]\]
 for any closed 
hyperbolic 3-manifold whose injectivity radius is bounded from 
below by $\delta$ and such 
that the rank of the fundamental group $\pi_1(M)$ of $M$ is at most $r$.
This rank is defined to be the minimal number of generators of 
$\pi_1(M)$, and it 
is bounded from above by a fixed multiple of the
volume (Theorem 1.10 in \cite{G04}). A similar statement also holds
true for random 3-dimensional hyperbolic mapping tori \cite{BGH}
and for random hyperbolic 3-manifolds of fixed Heegaard genus
\cite{HV19}.

Since the injectivity radius at points in $M_{\rm thick}$ is 
at least $\epsilon$, the submanifold $M_{\rm thick}$ of $M$ is 
uniformly
quasi-isometric to a finite graph $G$ of uniformly 
bounded valence, with constant
only depending on the dimension and the curvature bounds 
(see \cite{BGLM02} for a detailed discussion of this fact 
in the case of hyperbolic manifolds). 
If $\vert G\vert $ denotes 
the number of vertices of $G$, 
then for $k\leq \vert G\vert $ the $k$-th eigenvalue
$\lambda_k(M_{\rm thick})$ of $M_{\rm thick}$ with Neumann boundary
conditions is uniformly comparable to the $k$-th eigenvalue 
$\lambda_k(G)$ 
of the graph 
Laplacian of $G$ \cite{Man05}. 
Here and in the sequel, we list eigenvalues as 
$0=\lambda_0<\lambda_1\leq \lambda_2\leq \dots$
with each eigenvalue repeated according to its multiplicity. 
Note that $\vert G\vert $ is proportional to the 
volume ${\rm vol}(M_{\rm thick})$ of $M_{\rm thick}$, 
with multiplicative constants only depending on $n$ and $b$.

Now for any graph $G$, there are precisely $\vert G\vert$ eigenvalues
$0=\lambda_0(G)<\lambda_1(G)\leq\dots\leq \lambda_{\vert G\vert -1}(G)$, and
(Lemma 1 of \cite{Chu96})  
\[\sum_i\lambda_i(G)=\vert G\vert.\]
In particular, we have $\lambda_{\vert G\vert-1}(G)\geq 1$.
As the volume of a finite volume oriented manifold $M$ of curvature in 
$[-b^2,-1]$ is uniformly comparable to the volume of $M_{\rm thick}$, this 
together with the result in \cite{Man05} implies 
that there exists a constant
$q=q(n,b)>1$ so that $\lambda_{q{\rm vol}(M)}(M_{\rm thick})\geq 1/q$. 

To recover a relation between the spectrum of $M$ and the
volume of $M$ which resembles the result known for hyperbolic surfaces,
it is therfore  
desirable
to relate the small part of the spectrum of $M$ to the small part
of the spectrum of $M_{\rm thick}$, taken with Neumann boundary condition.
The first purpose of this
article is to establish such a relation. We show

\begin{thm} \label{thm1}
For a suitable choice of a Margulis constant, we have 
\[\lambda_k(M)\geq \min\{\frac{1}{3}\lambda_k(M_{\rm thick}),\frac{(n-2)^2}{12}\}\]
for every finite volume oriented Riemannian manifold $M$ of dimension
$n\geq 3$ and curvature 
$\kappa\in [-b^2,-1]$, and all $k\geq 0$. 
\end{thm}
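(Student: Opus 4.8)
The plan is to compare the Rayleigh quotient on $M$ with the Rayleigh quotient on $M_{\rm thick}$ via a variational argument using the min-max characterization of eigenvalues. Suppose $\lambda_k(M) < \frac{(n-2)^2}{12}$, since otherwise there is nothing to prove. Let $E_k\subset L^2(M)$ be the $(k+1)$-dimensional space spanned by the eigenfunctions $\phi_0,\dots,\phi_k$ of $\Delta$ on $M$. The key step is to show that the restriction map $\phi\mapsto \phi|_{M_{\rm thick}}$ is injective on $E_k$ and, more importantly, that for every $\phi\in E_k$ one has a \emph{reverse Poincar\'e-type inequality} controlling the part of the mass and the part of the Dirichlet energy carried by $M_{\rm thin}$ in terms of the quantities on $M_{\rm thick}$. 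Concretely, I would aim to prove that for each $\phi\in E_k$,
\[
\int_{M_{\rm thin}}\phi^2 \le 2\int_{M_{\rm thick}}\phi^2
\qquad\text{and}\qquad
\int_{M_{\rm thick}}|\nabla\phi|^2 \ge \tfrac13\int_{M}|\nabla\phi|^2 .
\]
Granting these two estimates, one gets for every $\phi\in E_k$
\[
\frac{\int_{M_{\rm thick}}|\nabla\phi|^2}{\int_{M_{\rm thick}}\phi^2}
\le \frac{\int_{M_{\rm thick}}|\nabla\phi|^2}{\tfrac13\int_M\phi^2}
\le 3\cdot\frac{\int_M|\nabla\phi|^2}{\int_M\phi^2}\le 3\lambda_k(M),
\]
and since $E_k|_{M_{\rm thick}}$ is a $(k+1)$-dimensional subspace of the domain of the Neumann form on $M_{\rm thick}$, the min-max principle gives $\lambda_k(M_{\rm thick})\le 3\lambda_k(M)$, which is the claim.

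The heart of the matter is therefore the analysis on each component of $M_{\rm thin}$, i.e.\ on a single Margulis tube or cusp $V$ with smooth boundary $\partial V\subset M_{\rm thick}$. On such a $V$ one has good coordinates: a cusp is a warped product over the horospherical cross-section with exponentially decaying metric, and a Margulis tube is a solid-torus-type warped product over a closed geodesic with the distance to the core as the radial parameter; in both cases the radial direction has a definite geometry coming from the curvature pinching $\kappa\in[-b^2,-1]$. For a function $\phi$ on $V$, I would slice $V$ by the level sets of the radial function $t$ and write $\phi(t,y)=\phi(0,y)+\int_0^t \partial_s\phi(s,y)\,ds$, so that by Cauchy–Schwarz and the coarea formula, $\int_V\phi^2$ is bounded by a constant (depending on the geometry of $V$) times $\int_{\partial V}\phi^2$ plus a constant times $\int_V|\nabla\phi|^2$. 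The crucial point is that the constant multiplying $\int_V|\nabla\phi|^2$ is controlled by the Poincar\'e/Laplacian constant of the tube or cusp: because $V$ is thin, its own first nonzero Neumann eigenvalue is bounded below — for cusps the decay of the cross-sectional volume forces a spectral gap, and for short Margulis tubes a direct computation with the warped-product metric gives $\lambda_1(V)\ge (n-2)^2/4$ (this is where the curvature upper normalization $\kappa\le -1$ and the threshold $(n-2)^2/12$ enter). One then decomposes $\phi|_V = \bar\phi + \phi^\perp$ into its average over $V$ (or a suitable radial profile) plus the orthogonal remainder, and estimates the two pieces separately; the remainder is absorbed by the spectral gap of $V$, and the average piece is controlled by its boundary values. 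Balancing the constants — and here the "suitable choice of Margulis constant'' in the statement is used to make the thin parts thin enough that the relevant constants are as small as needed — yields the two displayed inequalities with the stated numerical factors $2$ and $\tfrac13$.

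The main obstacle I anticipate is precisely this constant-chasing on the Margulis tubes: one needs a clean lower bound on the Neumann spectrum of a single thin tube that is uniform over all admissible warped-product metrics with $\kappa\in[-b^2,-1]$ and core length $\le 2\epsilon$, and one needs the geometric "trace'' constant relating $\int_V\phi^2$ to $\int_{\partial V}\phi^2$ to degenerate in a controlled way as the tube gets thin, so that the cross-terms can be absorbed. A secondary technical point is handling cusps, where $V$ is non-compact and of infinite diameter in the radial direction: there the exponential decay of the warped metric must be used to show that the tail contributes negligibly, and that the restriction map on $E_k$ remains injective (no eigenfunction of $M$ with small eigenvalue can be supported deep in a cusp, again by the spectral gap of the cusp). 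Once these uniform geometric estimates are in place, the variational comparison above is routine.
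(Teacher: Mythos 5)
Your variational frame (restrict the span of the first $k+1$ eigenfunctions to the thick part, prove injectivity, get a lower bound on the mass carried by the thick part, and finish with min-max) is exactly the skeleton of the paper's Proposition \ref{lowerbound}, and your final chain of inequalities is fine (the gradient inequality $\int_{M_{\rm thick}}|\nabla\phi|^2\geq\frac13\int_M|\nabla\phi|^2$ you announce is never needed). The gap is in the one place where all the work lies: your proposed proof of the mass estimate $\int_{M_{\rm thin}}\phi^2\leq 2\int_{M_{\rm thick}}\phi^2$. You want to absorb the non-constant part of $\phi$ on a thin component $V$ by a uniform lower bound on the first nonzero \emph{Neumann} eigenvalue of $V$, claiming $\lambda_1(V)\geq (n-2)^2/4$ for short tubes and a gap for cusps. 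This is false: a deep Margulis tube has a long flat boundary torus, and a function oscillating slowly along that torus, extended radially, has arbitrarily small Rayleigh quotient; indeed Corollary \ref{neumanntube} of this paper gives an upper bound $q_3\log{\rm vol}(T)/{\rm vol}(\partial T)^2$ for the Neumann gap of a tube or cusp, which tends to $0$ as the tube deepens. What is true, and what the paper proves via Jacobi field comparison (Lemma \ref{volumegrowth}) and integration by parts along radial rays (Lemma \ref{partialintegration}), is a \emph{conditional} Poincar\'e inequality $\int_T f^2\leq \frac{4}{(n-2)^2}\int_T\|\nabla f\|^2$, valid only under the hypothesis $\int_T f^2\geq \int_{\partial T}f^2$ (with a radially projected boundary volume element). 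Your spectral-gap absorption step has no such substitute and would fail on exactly the functions that make $\lambda_k(M_{\rm thick})$ small.

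The second, related problem is your boundary term. After slicing radially you are left with $\int_{\partial V}\phi^2$, a codimension-one integral, and you cannot bound it by $\int_{M_{\rm thick}}\phi^2$ alone: a trace inequality needs gradient control and a collar, and a single fixed hypersurface can carry a disproportionate share of the $H^1$ data. The paper circumvents this by letting the cut level vary in a unit collar and by a case analysis: either some level $s_i\in[0,1]$ satisfies $\int_{\{r_i=s_i\}}f^2\leq \int_{\{r_i\geq s_i\}}f^2$, in which case Lemma \ref{partialintegration} applies to the deep part, or the reversed inequality holds for all levels and integrating it over the unit shell shows that the shell (which belongs to the slightly enlarged thick part $\hat M_{\rm thick}$) already carries a third of the mass (Lemma \ref{rayleigh}). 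This shell trick is also the reason the theorem is stated ``for a suitable choice of a Margulis constant''. Also note that your unweighted Cauchy--Schwarz along radial segments produces a gradient constant growing linearly in the tube radius, so even that step needs a weighted version or the paper's comparison argument; and in the noncompact case you should justify that eigenfunctions below $(n-2)^2/12$ exist at all (the paper cites the lower bound $(n-1)^2/4$ for the essential spectrum). As written, the heart of the proof is missing and the announced constant-chasing cannot be completed along the route you describe.
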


The dependence of this estimate 
on the lower curvature bound $-b^2$
enters this result via the Margulis constant which depends on $b$.
Note also that the constant $(n-2)^2/12$ is 
uniformly comparable to
the lower bound $(n-1)^2/4$ for  
the bottom of the essential spectrum of a non-compact
finite volume manifold of curvature $\kappa\in [-b^2,-1]$
(Corollary 3.2 of \cite{H04}). 
We expect that our methods can be used to extend
Theorem \ref{thm1} 
to geometrically finite manifolds of infinite volume and
curvature in $[-b^2,-1]$. This could then be used to establish
an improvement of Corollary 3.3 of \cite{H04} which contains
the following statement as a special case: 
The number of eigenvalues
contained in $(0,(n-2)^2/12)$ is at most $d^{{\rm vol}(M)}$ for a fixed 
constant $d>0$. However, we do not attempt to carry 
out such a generalization in this article. 

As an application of Theorem \ref{thm1}, we recover the results of Schoen, 
of Dodziuk and of Randol and Dodziuk, and we relate 
the number of small 
eigenvalues to the volume 
as promised. 

\begin{cor}\label{application}  
For all $n\geq 3,b\geq 1$ there exists a constant 
$\chi=\chi(n,b)>0$ with the
following property. 
Let $M$ be a finite volume oriented Riemannian 
manifold of dimension $n$ and curvature $\kappa\in [-b^2,-1]$; then 
\begin{enumerate}
\item
$\lambda_1(M)\geq \frac{\chi}{{\rm vol}(M)^2}.$
\item $\lambda_{ {\rm vol}(M)/\chi}(M)\geq \chi$.
\end{enumerate}
\end{cor}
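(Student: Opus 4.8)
The plan is to deduce Corollary~\ref{application} from Theorem~\ref{thm1} together with the facts assembled in the introduction. For part~(1), I would argue as follows. If $\lambda_1(M) \geq (n-2)^2/12$, then since $\mathrm{vol}(M)$ is bounded below by a universal constant depending only on $n$ and $b$ (a finite volume manifold with curvature in $[-b^2,-1]$ cannot be arbitrarily small, as it contains an embedded ball of radius $\epsilon/2$), the inequality $\lambda_1(M) \geq \chi/\mathrm{vol}(M)^2$ holds trivially for $\chi$ small enough. Otherwise $\lambda_1(M) < (n-2)^2/12$, so by Theorem~\ref{thm1} we have $\lambda_1(M) \geq \tfrac{1}{3}\lambda_1(M_{\rm thick})$, and it remains to bound $\lambda_1(M_{\rm thick})$ from below by $\mathrm{const}/\mathrm{vol}(M)^2$. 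For this I invoke the quasi-isometry between $M_{\rm thick}$ and a finite graph $G$ of uniformly bounded valence with $|G| \asymp \mathrm{vol}(M_{\rm thick}) \asymp \mathrm{vol}(M)$, and the comparison $\lambda_1(M_{\rm thick}) \asymp \lambda_1(G)$ from \cite{Man05}; then the classical lower bound $\lambda_1(G) \geq c/(|G|^2 \cdot \mathrm{val})$ for the first nonzero eigenvalue of a connected graph (e.g.\ via Cheeger's inequality for graphs, using that a connected bounded-valence graph on $|G|$ vertices has edge-expansion at least $\asymp 1/|G|$) finishes the argument.

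For part~(2), the strategy is to produce roughly $\mathrm{vol}(M)/\chi$ eigenvalues of $M$ that are bounded below by $\chi$. As recalled in the introduction, $\sum_i \lambda_i(G) = |G|$ and there are exactly $|G|$ eigenvalues, so only boundedly many eigenvalues $\lambda_i(G)$ can be smaller than a fixed threshold: indeed if $\lambda_i(G) < 1/2$ for all $i < m$, then $|G| = \sum_i \lambda_i(G) \leq m \cdot \tfrac12 + (|G|-m)\cdot \lambda_{\max}(G)$, and since $\lambda_{\max}(G)$ is bounded above in terms of the valence (it is at most $2$ for the normalized graph Laplacian, or bounded by $2\,\mathrm{val}$ in other normalizations), this forces $m \leq \delta |G|$ for some $\delta = \delta(n,b) < 1$. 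Hence at least $(1-\delta)|G| \asymp \mathrm{vol}(M)$ of the eigenvalues $\lambda_k(G)$ exceed $1/2$, and via \cite{Man05} the same (up to a fixed multiplicative constant) holds for $\lambda_k(M_{\rm thick})$: there is $\eta = \eta(n,b) > 0$ and an index $k_0 \asymp \mathrm{vol}(M)$ with $\lambda_{k}(M_{\rm thick}) \geq \eta$ for all $k$ in a range of length $\asymp \mathrm{vol}(M)$ ending at $k_0$.

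Now I apply Theorem~\ref{thm1} at these indices. Fix $\chi$ small enough that $\chi \leq \tfrac13 \eta$ and $\chi \leq (n-2)^2/12$. For $k \leq k_0$ with $\lambda_k(M_{\rm thick}) \geq \eta$, Theorem~\ref{thm1} gives $\lambda_k(M) \geq \min\{\tfrac13 \lambda_k(M_{\rm thick}), (n-2)^2/12\} \geq \min\{\tfrac13 \eta, (n-2)^2/12\} \geq \chi$. Since $\lambda_{k_0}(M) \geq \chi$ and the $\lambda_k(M)$ are nondecreasing in $k$, in particular $\lambda_{k}(M) \geq \chi$ for the specific index $k = \lfloor \mathrm{vol}(M)/\chi \rfloor$, provided $\chi$ is chosen small enough that this index does not exceed $k_0 \asymp \mathrm{vol}(M)$ — which can be arranged by shrinking $\chi$ once more, depending only on the implied constants (hence only on $n,b$). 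This yields $\lambda_{\mathrm{vol}(M)/\chi}(M) \geq \chi$, as claimed.

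The main obstacle I anticipate is bookkeeping the various comparison constants so that a single $\chi = \chi(n,b)$ works for both statements: the quasi-isometry constant between $M_{\rm thick}$ and $G$, the multiplicative constant in the eigenvalue comparison of \cite{Man05}, the graph-theoretic constants (expansion lower bound, $\lambda_{\max}$ upper bound, the fraction $1-\delta$ of large eigenvalues), the proportionality constant between $|G|$ and $\mathrm{vol}(M)$, and the lower bound on $\mathrm{vol}(M)$ itself, all have to be tracked and combined. None of these steps is deep once Theorem~\ref{thm1} is in hand; the content of the corollary is essentially the reduction to the graph $G$, which is why it is stated as a corollary.
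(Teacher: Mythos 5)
Your argument follows the paper's own route: part (1) is exactly the proof given for Corollary \ref{eigenbelow} (volume bounded below by a constant $a(n,b)$, Theorem \ref{thm1}, Mantuano's comparison of $\lambda_1(M_{\rm thick})$ with $\lambda_1(G)$, and the graph Cheeger inequality with $h(G)\gtrsim 1/\vert G\vert$), and part (2) is the combination, sketched in the introduction, of the trace identity $\sum_i\lambda_i(G)=\vert G\vert$ with Mantuano's comparison and Theorem \ref{thm1}, with your counting of eigenvalues below $1/2$ a harmless variant of the paper's observation that $\lambda_{\vert G\vert-1}(G)\geq 1$. One wording slip: in part (2) you need the index $\lfloor{\rm vol}(M)/\chi\rfloor$ to be \emph{at least} $k_0$ (not ``not exceed $k_0$'') for monotonicity of $\lambda_k(M)$ in $k$ to apply, and this is indeed what shrinking $\chi$ guarantees, since $k_0\leq C(n,b)\,{\rm vol}(M)$.
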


Unlike in the work of Schoen, the constant $\chi(n,b)$ 
in the above Corollary 
is not explicit as it 
depends on a Margulis constant for Riemannian 
manifolds with curvature in $[-b^2,-1]$.
However, for hyperbolic manifolds
it can explicitly be estimated. 

For hyperbolic 3-manifolds $M$ we also obtain upper bounds  
for the small eigenvalues of $M$. We show

\begin{thm}\label{thm2}
There exists a number $c>0$ such that for every finite volume 
oriented hyperbolic 3-manifold $M$, we have  
\[\lambda_k(M)\leq c\log ({\rm vol}(M_{\rm thin})+3)\lambda_k(M_{\rm thick})\]
for all $k\geq 1$ such  that $\lambda_k(M_{\rm thick})<1/96$.
\end{thm}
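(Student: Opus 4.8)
The plan is to bound $\lambda_k(M)$ from above by the variational (min–max) principle. Fix $L^2$-orthogonal Neumann eigenfunctions $\phi_0,\dots,\phi_k$ of $M_{\rm thick}$ for the eigenvalues $\lambda_0\le\dots\le\lambda_k=\lambda_k(M_{\rm thick})<1/96$, and construct a linear extension operator $E$ with $E\phi_i\in H^1(M)$, $E\phi_i|_{M_{\rm thick}}=\phi_i$. If one shows that for every $\phi$ in the span of $\phi_0,\dots,\phi_k$
\[
\int_M|\nabla E\phi|^2\ \le\ C\,\log\!\big({\rm vol}(M_{\rm thin})+3\big)\,\lambda_k(M_{\rm thick})\int_{M_{\rm thick}}\phi^2 ,
\]
then, since $\int_M|E\phi|^2\ge\int_{M_{\rm thick}}\phi^2$, the $(k+1)$-dimensional space $E(\mathrm{span}\{\phi_0,\dots,\phi_k\})$ has Rayleigh quotients bounded by the right-hand coefficient, and min–max finishes. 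On $M_{\rm thick}$ put $E\phi=\phi$; then the $M_{\rm thick}$-part of the energy is $\int_{M_{\rm thick}}|\nabla\phi|^2\le\lambda_k(M_{\rm thick})\int_{M_{\rm thick}}\phi^2$, so everything reduces to the energy of $E\phi$ over each component of $M_{\rm thin}$.

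The heart of the matter is a single-component estimate: if $N$ is a Margulis tube or a cusp neighbourhood, then every $h\in C^\infty(\partial N)$ extends to $\widehat h\in H^1(N)$, $\widehat h|_{\partial N}=h$, with $\int_N|\nabla\widehat h|^2\le C\,\log({\rm vol}(N)+3)\int_{\partial N}|\nabla_{\partial N}h|^2$. I would prove this in cylindrical coordinates $(r,\theta,\sigma)$ about the core geodesic (metric $dr^2+\sinh^2\!r\,d\theta^2+\cosh^2\!r\,d\sigma^2$, $\partial N=\{r=R\}$) for a tube, and in horoball coordinates $(x,y,t)$ (metric $t^{-2}(dx^2+dy^2+dt^2)$, $\partial N=\{t=t_0\}$) for a cusp. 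In both cases $\partial N$ is a flat torus, one of whose directions may be very long. Decompose $h$ into Fourier modes in the two torus directions; extend the mean by the constant $\bar h$ (zero energy), the meridian modes of a tube using the two-dimensional conformal (harmonic) extension in the meridian disks, and the remaining modes by a product with a radial profile $f$ that equals $1$ on $\partial N$. The decisive choice is: $f\approx1$ as long as the corresponding level torus still exceeds unit size, and $f$ decays exponentially afterwards. A direct computation of the radial and two tangential pieces of the energy—using that the level tori of $r$ scale the tangential directions by $\tfrac{\cosh r}{\cosh R}$ and $\tfrac{\sinh r}{\sinh R}$ (for a cusp, both by $\tfrac{t_0}{t}$)—shows each mode costs at most $C\cdot(\text{effective depth})$ times its contribution to $\int_{\partial N}|\nabla_{\partial N}h|^2$, while the constant mode's radial piece is handled by the torus Poincaré inequality, whose large constant is exactly cancelled by the smallness of the radial weight. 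Finally, because the Margulis constant in dimension $3$ is universal, a Margulis tube of radius $R$ has ${\rm vol}(N)\asymp e^R$ and a cusp with boundary systole $\asymp\varepsilon$ has long boundary direction of length $\asymp{\rm vol}(N)$, so in both cases the effective depth is $\le C\log({\rm vol}(N)+3)$. Summing over components, and using $\log({\rm vol}(N)+3)\le\log({\rm vol}(M_{\rm thin})+3)$ for each $N$, gives $\sum_N\int_N|\nabla E\phi|^2\le C\log({\rm vol}(M_{\rm thin})+3)\int_{\partial M_{\rm thick}}|\nabla_{\partial M_{\rm thick}}\phi|^2$.

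It remains to bound $\int_{\partial M_{\rm thick}}|\nabla_{\partial M_{\rm thick}}\phi|^2$ by $C\lambda_k(M_{\rm thick})\int_{M_{\rm thick}}\phi^2$; a naive trace estimate only gives a bound involving $\int_{M_{\rm thick}}|\nabla^2\phi|^2$, which is not of the required order. Instead I would use a Rellich–Pohozaev identity: integrating $\mathrm{div}\big((X\!\cdot\!\nabla\phi)\nabla\phi-\tfrac12 X|\nabla\phi|^2\big)$ over $M_{\rm thick}$ for a vector field $X$ equal to the outward unit normal on $\partial M_{\rm thick}$, and using the Neumann condition $\partial_\nu\phi=0$ and $\|\Delta\phi\|_{L^2}\le\lambda_k(M_{\rm thick})\|\phi\|_{L^2}$, expresses $\tfrac12\int_{\partial M_{\rm thick}}|\nabla_{\partial M_{\rm thick}}\phi|^2$ in terms of $\int_{M_{\rm thick}}(X\!\cdot\!\nabla\phi)\Delta\phi$ and $\int_{M_{\rm thick}}\big((\nabla X)(\nabla\phi,\nabla\phi)+(\mathrm{div}X)|\nabla\phi|^2\big)$, hence by $C\big(\|\nabla\phi\|^2_{M_{\rm thick}}+\|\Delta\phi\|_{L^2}\|\nabla\phi\|_{L^2}\big)\le C\lambda_k(M_{\rm thick})\int_{M_{\rm thick}}\phi^2$, provided $\|X\|_\infty,\|\nabla X\|_\infty,\|\mathrm{div}X\|_\infty$ are bounded by absolute constants. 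This is exactly where the Margulis constant enters: for a suitable choice the boundary tori of $M_{\rm thick}$ are $C^{1,1}$-close to horospheres, so their second fundamental forms are uniformly bounded and $\nu$ extends to such an $X$ supported in a collar of width $\asymp\varepsilon$. Combining the three estimates proves the theorem (the bound $\lambda_k(M_{\rm thick})<1/96$ serves to absorb the higher powers of $\lambda_k(M_{\rm thick})$ that occur).

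I expect the main obstacle to be the per-component estimate of the second paragraph—obtaining a genuinely \emph{logarithmic}, rather than polynomial, dependence on ${\rm vol}(N)$ when $\partial N$ has a very long direction. A naive "extend to a constant" recipe costs a power of ${\rm vol}(N)$; the logarithm only appears because the radial profile must be kept $\approx1$ until the cross-sectional torus shrinks to unit size and then decay exponentially, because the two-dimensional Dirichlet integral in the meridian disks of a tube is conformally invariant, and because the "fat" part of a thin component—where its cross-section exceeds unit size—has depth comparable to $\log{\rm vol}(N)$.
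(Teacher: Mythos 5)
Your proposal is correct in outline, and its two halves play different roles relative to the paper. The per-component extension estimate of your second paragraph is essentially the paper's own mechanism: it is Proposition \ref{tube} combined with Lemma \ref{radiusbound} (tube radius $\gtrsim\log{\rm vol}(\partial T)$, special to dimension $3$), with the same log-producing device — keep the radial profile constant until the cross-sectional torus has unit size, then cut off, and absorb the cut-off cost with the flat-torus Poincar\'e inequality $\lambda_1(\partial T)\gtrsim {\rm vol}(\partial T)^{-2}$; your Fourier/conformal refinement is unnecessary but harmless. Where you genuinely depart from the paper is the middle step, which is exactly the difficulty the paper highlights (no a priori relation between boundary traces of a Neumann eigenfunction and its global $L^2$ norm and energy). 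The paper resolves it by enlarging the thick part to a collar of width $96$, proving that low-Rayleigh-quotient functions have little mass near the boundary (Lemma \ref{volumecontrol}, Corollary \ref{volume2}), choosing a good slice $\delta$ by averaging, modifying the eigenfunction by a radial factor $e^{\beta}$, and then doing orthogonalization bookkeeping ($F_k-H$) to feed the test functions into Rayleigh's principle. You instead bound the tangential boundary energy directly: the Rellich--Pohozaev identity with $X=\nu$ on $\partial M_{\rm thick}$, together with the Neumann condition, gives $\tfrac12\int_{\partial M_{\rm thick}}\vert\nabla_{\rm tan}\phi\vert^2\le \Vert X\Vert_\infty\Vert\nabla\phi\Vert_2\Vert\Delta\phi\Vert_2+C\Vert\nabla X\Vert_\infty\Vert\nabla\phi\Vert_2^2\le C'\lambda_k\int\phi^2$, and this holds uniformly for all linear combinations of the first $k+1$ Neumann eigenfunctions, so the subspace form of min--max applies with no projection argument and with the original $M_{\rm thick}$ rather than a fattened copy. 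That buys a substantially shorter and cleaner second half than Proposition \ref{eigenvalue}. The price is the one point you should not wave at: you must actually produce $X$ with $\Vert X\Vert_\infty,\Vert\nabla X\Vert_\infty$ universally bounded, supported in a collar of definite width \emph{inside} $M_{\rm thick}$. In dimension $3$ the boundary components are equidistant tori about the core geodesics and horotori, so their second fundamental forms are uniformly bounded and the normal field is the radial field; but you still need a uniform lower bound on the width of an embedded one-sided collar of $\partial M_{\rm thick}$ in $M_{\rm thick}$ (distinct thin components, or a component and itself, must not nearly touch inside the thick part), which requires a Margulis-type separation argument or a slight adjustment of the Margulis constant of the kind the paper also permits; you also implicitly use smoothness of Neumann eigenfunctions up to the boundary, which is fine once the boundary is smooth. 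With these points supplied, your argument gives the theorem in the stated form $\lambda_k(M)\le(1+C\log({\rm vol}(M_{\rm thin})+3))\lambda_k(M_{\rm thick})$.
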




The proof of Theorem \ref{thm1} uses standard comparison results and a
simple decomposition principle, and it is 
carried out in Section 2. Theorem \ref{thm2}
is shown in Section \ref{above}
with an explicit construction which is only 
valid for hyperbolic 3-manifolds.

\bigskip\noindent 
{\bf Acknowledgement:} I am grateful to Juan Souto for
inspiring discussions. Some versions of the results in this paper
were independently obtained by 
Anna Lenzhen and Juan Souto  \cite{LS16}.


\section{Bounding small eigenvalues from below}\label{below}

The goal of this section is to show Theorem \ref{thm1}.

Thus let $M$ be a finite volume oriented Riemannian 
manifold of dimension $n\geq 3$
and sectional curvature $\kappa\in [-b^2,-1]$ for some 
$b\geq 1$.  
Then $M$ admits a \emph{thick-thin decomposition}
\[M=M_{\rm thin}\cup M_{\rm thick}.\]
For a number $\epsilon=b^{-1}c(n)>0$, a so-called \emph{Margulis constant}, 
the thin part $M_{\rm thin}$ is the set of all 
points $x\in M$ with injectivity
radius ${\rm inj}(x)\leq  \epsilon$,
and $M_{\rm thick}=\{x\mid {\rm inj}(x)\geq \epsilon\}$.
The set 
$M_{\rm thick}$ is a non-empty 
compact connected manifold with (perhaps non-smooth) boundary, and  
$M_{\rm thin}$ is a union of (at most) finitely many 
\emph{Margulis tubes} and \emph{cusps}.

A Margulis tube is a tubular neighborhood of a closed geodesic
$\gamma$ in $M$ of length smaller than $2\epsilon$, and it is 
homeomorphic to $B^{n-1}\times S^1$
where $B^{n-1}$ is the closed unit ball in $\mathbb{R}^{n-1}$.
The geodesic $\gamma$ 
is called the \emph{core geodesic} of the tube. 

Let $T$ be such a Margulis tube, with core geodesic $\gamma$
of length $\ell<2\epsilon$. 
We fix a parameterization of $\gamma$ by arc length on the interval
$[0,\ell)$. 
Let $\sigma$ be the standard angular coordinate on the fibers of the 
unit normal bundle $N(\gamma)$ 
of $\gamma$ in $M$ obtained by parallel transport of 
the fibre over $\gamma(0)$  
(this unit normal bundle is an $S^{n-2}$-bundle
over $\gamma$), let $s$ be the length parameter
of $\gamma$ and let $\rho\geq 0$ be the radial distance from $\gamma$.
Via the normal exponential map, these functions define 
"coordinates" (i.e. a parameterization) $(\sigma,s,\rho)$ for $T-\{\gamma\}$,
defined on an open subset of $N(\gamma)\times (0,\infty)$
which will be specified below. 
In these "coordinates",
the maps $\rho\to (\sigma, s, \rho)$ are unit speed geodesics 
with starting point on $\gamma$ and initial velocity
perpendicular to $\gamma^\prime$.

There exists a continuous function 
\[R:N(\gamma)\to (0,\infty), (\sigma,s)\to R(\sigma,s)\]
such that in these "coordinates", we have
$T=\{\rho\leq R\}$. The metric on $T-\{\gamma\}$ is 
of the form
$h(\rho) +d\rho^2$ where $h(\rho)$ 
is a family of smooth metrics on the hypersurfaces $\rho={\rm const}$.

Lemma 2.4 of \cite{BCD93} states that there exists a constant
$\nu(n,b)>0$ which can be computed as an explicit function 
of the constants $c(n), b,n$ such that
\begin{equation}\label{mincontrol}
\min R\geq -\log \ell -\nu(n,b).\end{equation}
In particular, up to slightly adjusting the thick-thin decomposition
and replacing $M_{\rm thick}$  
by its union with all Margulis tubes with core geodesics of length
$\ell$ so that $\log \ell\geq -3-\nu(n,b)$, 
we may assume that 
for every component $T$ of $M_{\rm thin}$, the distance between
the core geodesic and the boundary $\partial T$ is at least three. 

In general, the boundary of a Margulis tube need not be smooth.
However, 
Theorem 2.14 of \cite{BCD93} shows that it can be perturbed 
to be smooth and of controlled geometry. 
We record this result for completeness.

\begin{theorem}\label{thmbcd93}
Let $T\subset M$ be a Margulis tube, with core geodesic $\gamma$ and 
boundary $\partial T$.
Then there exists a smooth hypersurface $H\subset T-\gamma$ with
the following properties.
\begin{enumerate}
\item The angle $\theta$ between the tangent of the radial geodesic
and the exterior normal to $H$ is less than $\pi/2-\alpha$
for some $\alpha=\alpha(n,b)\in (0,\pi/2)$.
\item The sectional curvatures of $H$ with respect to the induced
metric are bounded in absolute value by a constant depending only
on $n$ and $b$.
\item $H$ is homeomorphic to $\partial T$ by pushing along radial
arcs. The distance between $x\in H$ and its image 
$\bar x\in \partial T$ satisfies $d(x,\bar x)\leq bc(n)/50$.
\end{enumerate} 
\end{theorem}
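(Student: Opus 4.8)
The plan is to replace $\partial T$ by the radial graph of a mollification of the function $R$. Recall from the discussion preceding the theorem that in the Fermi "coordinates" $(\sigma,s,\rho)$ around $\gamma$ one has $T=\{\rho\le R\}$, so $\partial T$ is exactly the graph $\{\rho=R(\sigma,s)\}$ of the continuous function $R\colon N(\gamma)\to(0,\infty)$, and that after the adjustment following \eqref{mincontrol} we may assume $\min R\ge 3$. I would fix a smooth hypersurface $H=\{\rho=f(\sigma,s)\}$ with $f=\widehat R-L\delta$, where $\widehat R$ is a mollification of $R$ at a small scale $\delta$ and $L$ is a Lipschitz constant for $R$; the inward shift by $L\delta$ guarantees $f\le R$, hence $H\subset T-\gamma$. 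All three conclusions then follow from a priori estimates on $R$ together with standard facts about mollification and about graphs over the equidistant tubes $\{\rho=c\}$.

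The first, and I expect the main, step is the a priori estimate: there is a constant $L=L(n,b)$ such that $R$ is $L$-Lipschitz with respect to the induced length metric on the equidistant tube $\{\rho=\min R\}$, and the shape operators and metric distortion of the tubes $\{\rho=c\}$ are bounded uniformly for $c\ge 3$ by a constant depending only on $n$ and $b$. The second assertion is pure comparison geometry: since $\kappa\in[-b^2,-1]$, the shape operators of the $\{\rho=c\}$ solve a Riccati equation with bounded potential and bounded data at $\rho=3$, so they stay bounded, and this controls the ambient metric in the coordinates $(\sigma,s,\rho)$ on $\{\rho\ge 3\}$. The Lipschitz estimate is where the geometry of a \emph{thin} tube must be used carefully: the circle direction $s$ has length $\ell$ which may be tiny, and the radial-coordinate metric is correspondingly distorted, so one cannot work with the Euclidean-looking coordinates naively. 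Instead one argues that $R$ is, up to a bounded error, a level set of the displacement function $x\mapsto d(\widetilde x, g\widetilde x)$ of the deck transformation $g$ generating the (virtually cyclic, by the Margulis lemma) subgroup of short elements of $\pi_1(M)$ in the $2\epsilon$-thin part; this displacement function is convex, smooth away from $\widetilde\gamma$, and by the curvature bounds has gradient bounded above and below on $\{\rho\ge 3\}$, which yields the desired intrinsic Lipschitz bound with $L=L(n,b)$. One also records the $1$-Lipschitz property of the injectivity radius to treat the pieces of $\partial T$ adjacent to $M_{\rm thick}$ or to other components of $M_{\rm thin}$.

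Granting these estimates, fix a mollification scale $\delta=\delta(n,b)$ small enough that $2L\delta\le bc(n)/50$, and let $\widehat R$ be the convolution of $R$ with a smooth bump of width $\delta$ in geodesic normal coordinates on the fixed-geometry tube $\{\rho=\min R\}$; to avoid scale problems when $\ell$ is small one first pulls back to the universal cover of the $s$-circle, mollifies equivariantly, and descends. Then $\|\widehat R-R\|_{\infty}\le L\delta$, $\|\nabla\widehat R\|\le L$ and $\|\nabla^2\widehat R\|\le C(n,b)L/\delta=:C'(n,b)$. For (3): $H$ is the radial graph of $f=\widehat R-L\delta$ with $f\le R$, so it is homeomorphic to $\partial T$ by pushing along radial arcs and $d(x,\bar x)=R-f\le 2L\delta\le bc(n)/50$. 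For (1): the exterior unit normal to a radial graph $\{\rho=f\}$ is proportional to $\partial_\rho-\operatorname{grad}_{h(f)}f$, so the angle $\theta$ it makes with $\partial_\rho$ satisfies $\tan\theta=\|\operatorname{grad}_{h(f)}f\|\le L$, since $\|\nabla f\|\le L$ and $h(\cdot)$ is comparable to the reference metric on $\{\rho\ge 3\}$; hence $\theta\le\pi/2-\alpha$ with $\alpha=\operatorname{arccot}L=\alpha(n,b)>0$. For (2): the second fundamental form of $H$ is controlled by $\|\nabla f\|$, $\|\nabla^2 f\|$ and the uniformly bounded shape operators of the $\{\rho=c\}$ for $c\in[3,\max R]$, hence bounded in terms of $n,b$; the Gauss equation together with $\kappa\in[-b^2,-1]$ then bounds the intrinsic sectional curvatures of $H$ by a constant depending only on $n$ and $b$. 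The one point requiring care beyond the Lipschitz estimate is to keep all scales — $\delta$, the inward shift, and hence the final distance bound — mutually consistent with the target $bc(n)/50$, which can be arranged by choosing the Margulis constant $c(n)$ appropriately.
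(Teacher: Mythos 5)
You should first note that the paper does not prove this statement at all: it is quoted verbatim as Theorem 2.14 of \cite{BCD93} ("we record this result for completeness"), so there is no internal argument to compare with, and your sketch has to stand on its own as a reconstruction of the Buser--Colbois--Dodziuk smoothing. Your overall route --- establish a priori graph estimates for the non-smooth boundary, mollify the radial graph function $R$ at a fixed scale, push inward, and verify (1)--(3) from Riccati bounds on the shape operators of the equidistant tubes plus the Gauss equation --- is a sensible one and is close in spirit to the tube analysis in \cite{BCD93}. The final verification steps (the mollification bounds, $\tan\theta\le\Vert\nabla f\Vert$ for a radial graph, second fundamental form controlled by $\Vert \nabla f\Vert,\Vert\nabla^2f\Vert$ and the shape operators, then Gauss) are indeed routine once the key estimate is in place.

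The genuine gap is that the key estimate --- $R$ is $L(n,b)$-Lipschitz as a graph over the fixed reference tube $\{\rho=\min R\}$ --- is asserted but not actually obtained from what you wrote. First, $\partial T$ is the level set of twice the injectivity radius, i.e. of $F=\min_{k\neq 0}d_{g^k}$, not of the displacement of the single generator, and $F$ is only Lipschitz; more importantly, the claim that the displacement function "has gradient bounded above and below on $\{\rho\ge 3\}$ by the curvature bounds" is false for short tubes: for a pure translation of length $\ell$ in $\mathbb H^3$ one has $d_g\approx\ell\cosh\rho$ and $\partial_\rho d_g\approx\ell\sinh\rho$, which is tiny at $\rho=3$ when $\ell$ is tiny. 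What you actually need, and what is true, is a positive lower bound $c_1(n,b)$ on the \emph{radial} derivative (in the one-sided sense for the min) of $F$ on the region where $F$ is at least a definite constant --- this is the transversality that makes $\partial T$ a Lipschitz radial graph at all; a lower bound on the norm of the gradient alone does not give a graph. Second, even granting a uniform local slope bound along $\partial T$, transferring it to a Lipschitz constant for $R$ over the \emph{fixed} tube $\{\rho=\min R\}$ requires the oscillation bound $\max R-\min R\le C(n,b)$, since radial projection between levels contracts horizontal distances by a factor exponential in the height difference and hence inflates slopes accordingly; without it your constant $L$ is not a function of $(n,b)$. This oscillation bound does hold: by the comparison inequality $\sinh(d_{g^k}/2)\ge\sinh(k\ell/2)\cosh\rho$ and the lower bound (\ref{mincontrol}) on $\min R$, every displacement at radius $\min R$ is already at least a constant $\delta_0(n,b)>0$, and then the radial-derivative bound forces $F$ to reach $2\epsilon$ within radial distance $2\epsilon/c_1$, so $\max R-\min R\le C(n,b)$ --- but none of this appears in your argument, and it is exactly the content that makes the Lipschitz claim (and hence the whole construction) quantitative in $(n,b)$ only. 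With these two points supplied, your mollify-and-shift construction does yield the theorem.
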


In the sequel we always assume that the boundary $\partial T$ of 
a Margulis tube has the properties stated in 
Theorem \ref{thmbcd93}.
Then the injectivity radius of $\partial T$ 
with respect to the induced metric is bounded from below by
a positive constant only depending on the curvature bound and
the dimension (Corollary 2.24 of \cite{BCD93}).

Our first goal is to obtain a better understanding of the 
volume element of a Margulis tube. To this end 
we record a variant of Lemma 1 
of \cite{DR86}. 
We begin with a simple comparison lemma. 

Let $\tilde M$ be a simply connected complete Riemannian manifold 
of dimension $n$, with sectional curvature
$\kappa\leq -1$. Let $\eta:\mathbb{R}\to \tilde M$ be a geodesic parametrized by 
arc length and let $Y_1(t),\dots,Y_{n-1}(t)$ be orthonormal parallel
vector fields along $\eta$, orthogonal to $\eta^\prime$. Let moreover 
$J_1,\dots,J_{n-1}$ be Jacobi fields along $\eta$,
orthogonal to $\eta^\prime$, with the following initial conditions.

\begin{enumerate}
\item $J_1(0)=Y_1(0)$, and 
$\frac{\nabla}{dt}J_1(0)=0$.
\item For $i=2,\dots,n-1$ 
we have  
$J_i(0)=0$, and covariant derivatives 
$\frac{\nabla}{dt} J_i(t)\vert_{ t=0}=Y_i(0)$. 
\end{enumerate}

Then $J(t)=(J_1(t),\dots,J_{n-1}(t))$ can be viewed as an 
$(n-1,n-1)$-matrix with respect to the basis $Y_1(t),\dots,Y_{n-1}(t)$.  
Define the matrix $A(t)$ by 
\begin{equation}\label{riccati}
\frac{\nabla}{dt}J(t)=A(t)J(t).\end{equation}
The matrix valued map $t\to A(t)$ satisfies the \emph{Riccati equation}
\[A^\prime +A^2+R_{\eta^\prime}=0\]
where $R_{\eta^\prime}$ is the curvature tensor of $\tilde M$ 
evaluated on $\eta^\prime$. In particular,
$A$ is self-adjoint.

Using the notations from \cite{E94}, 
for $t>0$ we denote by $j(t)$ the determinant of the matrix $J(t)$.

\begin{lemma}\label{volumegrowth}
For all $R>0$ we have 
\[j(R)\geq (n-1)\int_0^Rj(t)dt.\]
\end{lemma}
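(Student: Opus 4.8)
The plan is to reduce the claimed inequality to a pointwise lower bound on the logarithmic derivative of $j$, and to extract that bound from the Riccati equation. Set $h(R)=j(R)-(n-1)\int_0^R j(t)\,dt$. Since $n\ge 3$, the matrix $J(0)$ is singular, so $j(t)\to 0$ as $t\to 0^+$ and $h$ extends continuously to $R=0$ with $h(0)=0$. As $\tilde M$ has non-positive curvature, $J(t)$ is invertible for every $t>0$: a Jacobi field of the form $V=J(\cdot)v$ has $V(0)$ orthogonal to $\frac{\nabla}{dt}V\vert_{t=0}$ (the images of $J(0)$ and of $\frac{\nabla}{dt}J\vert_{t=0}$ being orthogonal), so $|V|^2$ is convex with vanishing derivative at $0$ and hence non-decreasing; using in addition that $\ker J(0)$ and $\ker\frac{\nabla}{dt}J\vert_{t=0}$ are complementary, one sees $V$ cannot vanish for $t>0$ unless $v=0$. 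Thus $A(t)=\frac{\nabla}{dt}J(t)\cdot J(t)^{-1}$ is smooth on $(0,\infty)$ and $j'(t)/j(t)=\operatorname{tr}A(t)=:f(t)$, so $h'(R)=(f(R)-(n-1))\,j(R)$ with $j(R)>0$. It therefore suffices to show $f(t)\ge n-1$ for all $t>0$: then $h$ is non-decreasing on $[0,\infty)$, whence $h(R)\ge h(0)=0$, which is the assertion.

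To bound $f$ I would take the trace of $A'+A^2+R_{\eta'}=0$, getting $f'=-\operatorname{tr}(A^2)-\operatorname{tr}(R_{\eta'})$. Since $A$ is self-adjoint, Cauchy--Schwarz applied to its eigenvalues gives $\operatorname{tr}(A^2)\ge(\operatorname{tr}A)^2/(n-1)=f^2/(n-1)$; and $\operatorname{tr}(R_{\eta'})=\sum_i\langle R(Y_i,\eta')\eta',Y_i\rangle\le-(n-1)$ because the sectional curvature is $\le-1$. Hence
\[
f'(t)\ \ge\ \frac{(n-1)^2-f(t)^2}{n-1}\qquad(t>0).
\]
Moreover, in the parallel frame $Y_1,\dots,Y_{n-1}$ one has $J(t)=\operatorname{diag}(1,t,\dots,t)+O(t^2)$, so $j(t)=t^{n-2}\psi(t)$ with $\psi$ smooth and $\psi(0)=1$; consequently $f(t)=(n-2)/t+\psi'(t)/\psi(t)\to+\infty$ as $t\to0^+$.

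Finally I would compare $f$ with the constant $n-1$, which is the stationary solution of $\phi'=((n-1)^2-\phi^2)/(n-1)$. Given any $t_1>0$, choose $\tau\in(0,t_1)$ with $f(\tau)>n-1$, which is possible since $f\to+\infty$ at $0$; on $[\tau,t_1]$ the function $f$ is a supersolution of that ODE with $f(\tau)>n-1$, so the standard comparison principle (the right-hand side is locally Lipschitz) yields $f(t)\ge n-1$ on $[\tau,t_1]$, in particular $f(t_1)\ge n-1$. As $t_1$ was arbitrary, $f\ge n-1$ on $(0,\infty)$, and combined with the first paragraph this proves the lemma. The two slightly delicate points are the invertibility of $J(t)$ for $t>0$ under these mixed initial conditions and the comparison at the singular endpoint $t=0$, where $f$ blows up; I expect the latter to be the main thing to get right, and it is handled precisely by the localization to $[\tau,t_1]$ above.
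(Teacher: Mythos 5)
Your reduction in the first paragraph is fine, and in fact cleaner than it needs to be: since $j(0)=\det J(0)=0$ for $n\ge 3$ and $J(t)$ is invertible for $t>0$, it does suffice to prove the pointwise bound $f(t)=j'(t)/j(t)\ge n-1$. The gap is in how you try to prove that bound. Taking the trace of the Riccati equation gives $f'=-\operatorname{tr}(A^2)-\operatorname{tr}(R_{\eta'})$, and Cauchy--Schwarz gives $\operatorname{tr}(A^2)\ge (\operatorname{tr}A)^2/(n-1)=f^2/(n-1)$, i.e.\ $-\operatorname{tr}(A^2)\le -f^2/(n-1)$. That is the \emph{wrong} direction for your purposes: it yields only the Bishop-type upper bound $f'\le -f^2/(n-1)-\operatorname{tr}(R_{\eta'})$ (useful under lower Ricci bounds), not the claimed lower bound $f'\ge \bigl((n-1)^2-f^2\bigr)/(n-1)$. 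Moreover, under the hypothesis of the lemma ($\kappa\le -1$ only) you have no upper bound on $-\operatorname{tr}(R_{\eta'})$ at all. And the supersolution property you want is simply false, already in the model: in $\mathbb{H}^3$ with these mixed initial conditions, $f(t)=2\coth(2t)$, so $f'=-4/\sinh^2(2t)$ while $\bigl((n-1)^2-f^2\bigr)/(n-1)=-2/\sinh^2(2t)$, i.e.\ $f'<\bigl((n-1)^2-f^2\bigr)/(n-1)$. The discrepancy is exactly the Cauchy--Schwarz defect: $A$ is not a multiple of the identity here (its eigenvalues behave like $\tanh$ and $\coth$), so $\operatorname{tr}(A^2)$ strictly exceeds $(\operatorname{tr}A)^2/(n-1)$. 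Consequently the ODE comparison on $[\tau,t_1]$ has no valid differential inequality to work with, and the argument collapses at its central step.

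What is actually needed is a \emph{matrix} (or Jacobi-field) comparison under the upper curvature bound, not a scalar trace argument: this is how the paper proceeds, invoking Eschenburg's comparison theorems to get $j'(t)/j(t)\ge \bar j'(t)/\bar j(t)=(n-2)\coth t+\tanh t$, where $\bar j$ is the hyperbolic model determinant for the same mixed initial conditions. Since $(n-2)\coth t+\tanh t>n-1$ for all $t>0$ (and $n\ge 3$), that input would let your first paragraph finish the proof immediately --- indeed more directly than the paper's own route, which instead compares the logarithmic derivatives of the integrated quantities $a(t)=\int_0^t j$ and $b(t)=\int_0^t\bar j$ via a first-failure-point argument. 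So the architecture of your proof is salvageable, but the justification of $f\ge n-1$ must be replaced by the genuine Riccati/Jacobi matrix comparison; the trace inequality cannot deliver it.
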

\begin{proof}
The proof follows from standard comparison. 
We use the above notations. 

Let $\bar \eta:\mathbb{R}\to \mathbb{H}^n$ be a geodesic in the hyperbolic $n$-space $\mathbb{H}^n$.
Let $\bar Y_1(t),\dots,\bar Y_{n-1}(t)$ be parallel vector fields along $\bar\eta$
such that for each $t$, $\bar Y_1(t),\dots,\bar Y_{n-1}(t)$ is an orthonormal basis of 
$\bar \eta^\prime(t)^\perp$. 
Let $\bar J_i$ $(1\leq i\leq n-1)$ be the Jacobi fields along $\bar\eta$ defined by 
$\bar J_1(0)=\bar Y_1(0),\frac{\nabla}{dt}\bar J_1(0)=0$, 
and for $i\geq 2$ we require that
$\bar J_i(0)=0$ and $\frac{\nabla}{dt}\bar J_i(0)=\bar Y_i(0)$. Write 
$\bar J(t)=(\bar J_1(t),\dots,\bar J_{n-1}(t))$ and view this as a matrix with respect to the
basis $\bar Y_1(t),\dots,\bar Y_{n-1}(t)$ 
of $\bar\eta^\prime(t)^\perp$.

The Jacobi fields $\bar J_i$ can explicitly be computed as follows. We have
$\bar J_1(t)=\cosh(t)\bar Y_1(t)$, and 
$\bar J_i(t)=\sinh(t)\bar Y_i(t)$ for $i\geq 2$. 
In particular, if we denote by $\bar j(t)$ the determinant
of $\bar J(t)$ then 
\[\bar j(t)=\sinh^{n-2}(t)\cosh(t).\]
Thus $\bar j^\prime(t)=(n-2)\sinh^{n-3}(t)\cosh^2(t)+
\sinh^{n-1}(t)$ and hence
\[\bar j^\prime(t)=((n-2)\coth(t)+\tanh(t))\bar j(t).\]

Using the notations from the lemma and the text preceding it, 
Theorem 3.2, Theorem 3.4 and Section 6.1 of \cite{E87} show that 
\begin{equation}\label{eschenburg}
j^\prime(t)/j(t)\geq \bar j^\prime(t)/\bar j(t)\end{equation}
for all $t>0$.

Write $b(t)=\frac{1}{n-1}\sinh^{n-1}(t)$; then 
$b(t)=\int_0^t\bar j(s)ds$ and 
\begin{equation}\label{hyperbolic}
\frac{d}{dt}\log b(t)=\frac{b^\prime(t)}{b(t)}=(n-1)\coth(t)>n-1
\end{equation}
for all $t$. 

For $a(t)=\int_0^tj(s)ds$ we have $a^\prime(t)=j(t)$. 
By the estimate (\ref{hyperbolic}), it now suffices to show 
that 
\[\frac{d}{dt}\log a(t)=\frac{a^\prime(t)}{a(t)}\geq \frac{d}{dt}\log b(t)\]
for all $t>0$. 

Let $\delta>0$. It suffices to show that 
for all $t>0$ we have 
$\frac{d}{dt}\log a(t)\geq (1-\delta)\frac{d}{dt}\log b(t)$. 
To this end note that by comparison, the inequality holds  true for all small $t$ (see \cite{E87} for 
details). 
As this condition is closed, if the inequality does not hold for all $t$ then 
there is a number $T_0>0$ so that the 
estimate holds true for $t\leq T_0$, and it is violated for 
$T_0<t<T_0+\tau$ where $\tau >0$. 
Then we have 
$\frac{d}{dt}\log a(T_0)=(1-\delta)\frac{d}{dt}\log b(T_0)$,

Now 
\[\frac{d^2}{dt^2}\log a=\frac{d}{dt}\frac{a^\prime}{a}=\frac{a^{\prime\prime}a-(a^\prime)^2}{a^2}
=\frac{a^{\prime\prime}}{a}-(\frac{a^\prime}{a})^2=\frac{a^\prime}{a}(\frac{a^{\prime\prime}}{a^\prime}-
\frac{a^\prime}{a}).\]
Inequality (\ref{eschenburg}) implies that 
$\frac{a^{\prime\prime}}{a^\prime}(T_0)\geq 
\frac{b^{\prime\prime}}{b^\prime}(T_0)$.
As $\frac{a^\prime}{a}(T_0)=(1-\delta)\frac{b^\prime}{b}(T_0)$,
we conclude that
\[\frac{d^2}{dt^2}\log a(T_0)> (1-\delta)
\frac{d^2}{dt^2}\log b(T_0).\]
Using Taylor expansion, we deduce that
$\frac{a^\prime}{a}(s)\geq (1-\delta)\frac{b^\prime}{b}(s)$ for all $s>T_0$ 
which are sufficiently close to $T_0$. This  
contradicts the choice of $T_0$. 
Since $\delta >0$ was arbitrary, the lemma follows.
\end{proof}

A \emph{cusp} $T$ is an unbounded component of $M_{\rm thin}$. 
It is homeomorphic to $N\times [0,\infty)$ where
$N$ is a closed manifold of dimension $n-1$. 
The manifold $N$ is 
homeomorphic to the quotient of 
a horosphere in the universal covering $\tilde M$ of $M$ 
by a parabolic subgroup of the isometry group of 
$\tilde M$. 
As before, the boundary of $T$ need not be smooth, but
everything said so far for boundaries of Margulis tubes
is also valid for cusps (see the remark after Theorem 2.14 in 
\cite{BCD93}). In particular, 
Theorem \ref{thmbcd93} holds true for boundaries of cusps.

A version of Lemma \ref{volumegrowth} is also valid for cusps and follows with exactly the same
argument. Namely, let $T$ be a cusp, with boundary $\partial T$. Write $T=\partial T\times [0,\infty)$ where
$\partial T\times \{s\}$ is the hypersurface of distance $s$ to $\partial T$. 
Let $\eta:[0,\infty)\to T$ be a radial
geodesic and let $J(t)=(J_1(t),\dots,J_{n-1}(t))$ be Jacobi fields with the following properties. 
The vectors $J_1(0),\dots,J_{n-1}(0)$ define an orthonormal basis of 
$\eta^\prime(0)^\perp$, and $\Vert J(t)\Vert \to 0$ $(t\to \infty)$. Denote by $j(t)$ the 
determinant of $J(t)$, viewed as a matrix with respect to 
an orthonormal basis of $\eta^\prime(t)^\perp$ defined by parallel vector fields along $\eta$.

 Let $\bar j(t)$ be the corresponding function for Jacobi fields defined by a horoball in hyperbolic 
 $n$-space. Explicit calcuation yields $\bar j(t)=e^{-(n-1)t}$ and hence
 $\bar j(T)= (n-1)\int_T^\infty \bar j(s)ds$ for all $T$. 
 The comparison argument from the proof of 
 Lemma \ref{volumegrowth} together with the results in \cite{E87}  
 imply that the inequality $j(T)\geq \int_T^\infty j(s)ds$ holds true for cusps in 
 a manifold $M$ of curvature $\leq -1$. In fact, this statement can also be obtained as a limiting
 case  of Lemma \ref{volumegrowth} by reparametrizing 
the radial geodesic arc $\eta$ of the Margulis tube, 
choosing $\eta(R)$ as a basepoint, renormalizing 
the Jacobi fields with rescaling and the Gram-Schmidt procedure 
and letting $R$ tend to infinity.

As one consequence of the above discussion, 
by possibly replacing $M_{\rm thick}$ by 
a neighborhood of uniformly bounded radius
we may assume that 
\begin{equation}\label{volume}
\frac{3}{2} {\rm vol}(M_{\rm thick})\geq  {\rm vol}(M).\end{equation}

Consider again a Margulis tube $T$ in $M$.
Recall that the boundary
$\partial T$ of $T$ equals the set $\{\rho=R\}$ which can 
be viewed as a graph over the unit normal bundle $N(\gamma)$ of $\gamma$
for some smooth function $R:N(\gamma)\to (0,\infty)$.
Here we use Theorem \ref{thmbcd93} to assure that the function $R$ is smooth. 
We equip $\partial T$ with the volume element determined by this description
(this is in general not the volume element of $\partial T$ viewed as
a smooth hypersurface in $M$). By this we mean that the volume 
element of the hypersurface $\partial T$ at a point 
$(\sigma_0,s_0,R(\sigma_0,s_0))$ equals the radial 
projection of the volume 
element of the local hypersurface $\{(\sigma,s,R(\sigma_0,s_0))\}$ 
passing through $(\sigma_0,s_0,R(\sigma_0,s_0))$.  
This makes sense since by  Theorem \ref{thmbcd93}, 
radial geodesics intersect $\partial T$ transversely. 
Or, equivalently, 
this volume element is chosen in such a way that the Jacobian at 
$(\sigma,s)$ of the map 
$(\sigma,s)\to (\sigma,s,R(s,\sigma))$  equals
the Jacobian of the normal exponential map $(\sigma,s)\to 
\exp(R(\sigma,s)(\sigma,s))$, and this is just the function $j$ 
from Lemma \ref{volumegrowth}. 
The same construction is also valid for a cusp $T$, 
and we equip $\partial T$ with the corresponding volume element
obtained by projection of the volume element on local 
hypersurfaces orthogonal to the radial geodesics.

\begin{lemma}\label{partialintegration}
Let $T\subset M_{\rm thin}$ be a Margulis tube or a cusp with boundary 
$\partial T$, and 
let $f$ be a smooth function
on $T$ with $\int_T f^2\geq \int_{\partial T}f^2$; then 
\[\int_T f^2\leq \frac{4}{(n-2)^2}\int_T \Vert \nabla f \Vert^2.\]
\end{lemma}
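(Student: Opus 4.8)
The plan is to reduce the inequality to a one-dimensional computation along the radial geodesics, using the volume element on $\partial T$ introduced above together with the comparison estimate of Lemma \ref{volumegrowth} (resp.\ its cusp version). First I would foliate $T-\gamma$ (resp.\ $T$) by the radial geodesics $\rho\mapsto(\sigma,s,\rho)$ and write $\int_T f^2$ and $\int_T\|\nabla f\|^2$ as iterated integrals: an outer integral over the parameter space $N(\gamma)$ of radial geodesics against the base volume element, and an inner integral over $\rho$ against the Jacobian $j(\rho)$ of the normal exponential map. Since $\|\nabla f\|^2\geq (\partial_\rho f)^2$ pointwise (the radial direction being unit), it suffices to prove, for each fixed radial geodesic $\eta$ with endpoint parameter $R$ (and $R\ge 3$ by our adjustment of the thick–thin decomposition, though only $R>0$ is needed), the scalar inequality
\[
\int_0^R f(\rho)^2\, j(\rho)\,d\rho \;\le\; \frac{4}{(n-2)^2}\int_0^R (f'(\rho))^2\, j(\rho)\,d\rho,
\]
\emph{provided} the analogous global balance $\int_T f^2\ge\int_{\partial T}f^2$ holds — with the caveat that the balance hypothesis is global, not fibrewise, so the fibrewise reduction cannot be completely clean; I address this below.

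The one-dimensional estimate I would prove by integration by parts in the spirit of Lemma 1 of \cite{DR86}. Write $g=f^2$; integrating by parts against the weight $j$ and using Lemma \ref{volumegrowth} in the form $j(\rho)\ge (n-1)\int_0^\rho j$, I would set $G(\rho)=\int_0^\rho j(t)\,dt$, so $G'=j$ and $j\ge (n-1)G$. Then
\[
\int_0^R f^2 j\,d\rho \;=\; \big[f^2 G\big]_0^R - \int_0^R 2 f f' G\,d\rho
\;\le\; f(R)^2 G(R) + 2\int_0^R |f|\,|f'|\,\frac{j}{n-1}\,d\rho,
\]
and by Cauchy–Schwarz the last term is at most $\frac{2}{n-1}\big(\int f^2 j\big)^{1/2}\big(\int (f')^2 j\big)^{1/2}$. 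The boundary term $f(R)^2 G(R)$ is exactly (a lower bound for) the contribution of $\partial T$ in the chosen volume element, so after integrating over $N(\gamma)$ the boundary terms sum to $\int_{\partial T} f^2$, which by hypothesis is $\le\int_T f^2$. Absorbing it and solving the resulting quadratic inequality $X\le X + \frac{2}{n-1}\sqrt{X}\sqrt{Y}$ is vacuous, so I must be more careful: the boundary term must be genuinely subtracted rather than crudely bounded. The correct bookkeeping is to keep $\int_T f^2 - \int_{\partial T}f^2 \ge 0$ and show it is $\le \frac{2}{n-1}\sqrt{\int_T f^2}\,\sqrt{\int_T\|\nabla f\|^2}$; then using $\int_T f^2 - \int_{\partial T} f^2 \ge 0$ once more to replace $\sqrt{\int_T f^2}$ on the right is again circular, so instead one writes $\int_{\partial T} f^2 \le \int_T f^2$ to get $\int_T f^2 \le \int_{\partial T} f^2 + \frac{2}{n-1}\sqrt{\int_T f^2}\sqrt{\int_T\|\nabla f\|^2} \le \int_T f^2$ — which shows the argument needs the sharper fibrewise inequality with the boundary term retained, yielding after dividing by $\sqrt{\int_T f^2}$ the bound $\sqrt{\int_T f^2}\le \frac{2}{n-1}\sqrt{\int_T\|\nabla f\|^2} + (\text{boundary correction})/\sqrt{\int_T f^2}$, and the boundary correction is nonpositive under the hypothesis. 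Squaring gives the claimed constant $4/(n-2)^2$ once one checks $\frac{2}{n-1}\le\frac{2}{n-2}$ and tracks the cross term — the factor $(n-2)$ rather than $(n-1)$ coming from the extra slack in \eqref{eschenburg}, namely $\bar j'/\bar j\ge (n-2)\coth+\tanh$, which in the integrated Hardy-type inequality degrades $n-1$ to $n-2$.

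The cusp case is identical after replacing $\int_0^R$ by $\int_0^\infty$, $G(\rho)$ by $\int_\rho^\infty j$, and using the cusp comparison $j(\rho)\ge\int_\rho^\infty j$ noted after Lemma \ref{volumegrowth}; the boundary term now sits at $\rho=0$, i.e.\ at $\partial T$, and the decay $\|J(t)\|\to 0$ kills the term at infinity. The main obstacle I anticipate is precisely the correct handling of the boundary term: one must organise the integration by parts so that the $\int_{\partial T} f^2$ term appears with the right sign and is then disposed of by the hypothesis $\int_T f^2\ge\int_{\partial T}f^2$, rather than bounding it away too early and obtaining a vacuous inequality. A secondary technical point is that the volume element we put on $\partial T$ is the \emph{projected} one (the Jacobian $j(R)$ of the normal exponential map), not the intrinsic hypersurface volume; this is exactly what makes the boundary term in the fibrewise integration by parts match $\int_{\partial T} f^2$ on the nose, so it is essential to use this and not the intrinsic volume — and this is why the lemma is stated with that particular volume element in mind.
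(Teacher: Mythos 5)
Your setup (iterated integrals against the Jacobian $j$, integration by parts with $G(\rho)=\int_0^\rho j$, Cauchy--Schwarz, and the insistence on the projected volume element on $\partial T$) is the paper's approach, but you never resolve the one point on which the whole lemma turns: the coefficient of the boundary term. You correctly observe that keeping the boundary term as $\int_{\partial T}f^2$ with coefficient $1$ and then invoking the hypothesis $\int_{\partial T}f^2\le\int_T f^2$ is vacuous, but your attempted repair (dividing by $\sqrt{\int_T f^2}$ and claiming a ``nonpositive boundary correction'') does not produce a usable inequality, and your final explanation of where $(n-2)$ comes from is incorrect. The fix is simply to apply Lemma \ref{volumegrowth} \emph{at the boundary as well as in the bulk}: since $G(R)=\int_0^R j\le j(R)/(n-1)$, the boundary term $f(R)^2G(R)$ integrates over $N(\gamma)$ to at most $\tfrac{1}{n-1}\int_{\partial T}f^2$ (this is exactly where the projected volume element is used). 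Then the hypothesis gives
\[
\int_T f^2\;\le\;\frac{1}{n-1}\int_T f^2+\frac{2}{n-1}\int_T\vert f\vert\,\vert f'\vert,
\]
so that $\tfrac{n-2}{n-1}\int_T f^2\le \tfrac{2}{n-1}\bigl(\int_T f^2\bigr)^{1/2}\bigl(\int_T\Vert\nabla f\Vert^2\bigr)^{1/2}$, and dividing and squaring yields the constant $4/(n-2)^2$. In particular the factor $n-2$ has nothing to do with any ``slack'' in the Eschenburg comparison ($\bar j'/\bar j=(n-2)\coth+\tanh\ge n-1$, and Lemma \ref{volumegrowth} retains the full $n-1$); it arises purely from the bookkeeping $1-\tfrac{1}{n-1}=\tfrac{n-2}{n-1}$ after the boundary term, weighted by $\tfrac{1}{n-1}$, is absorbed using the hypothesis. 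The same remark applies to your cusp sketch: there the boundary term at $\rho=0$ must likewise be bounded by $\tfrac{1}{n-1}\int_{\partial T}f^2$ (using the cusp analogue of Lemma \ref{volumegrowth} with the factor $n-1$), and square-integrability kills the term at infinity; no fibrewise version of the hypothesis is ever needed, since all steps are performed on the integrated quantities.
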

\begin{proof} We begin with the case that $T$ is a Margulis tube. 
Let $\gamma:[0,\ell)\to M$ be a parameterization of the core geodesic 
of $T$ by arc length. We use normal exponential coordinates and Lemma \ref{volumegrowth}.
Let $\rho$ be the radial distance from $\gamma$ and let 
$j(\sigma,s,\rho)$ be the Jacobian of the normal exponential map 
at the point with
"coordinates" $(\sigma,s,\rho)$. 
Then we have 
\[\int_Tf^2=\int_{S^{n-2}}d\sigma\int_0^\ell ds
\int_0^{R(\sigma,s)} f^2 j(\sigma,s,\rho) d\rho.\]

Define $a(\sigma,s,\rho)=\int_0^{\rho}j(\sigma,s,u)du$. 
By Lemma \ref{volumegrowth} 
and the definition of the volume element on $\partial T$, 
integration by parts along the radial rays from $\gamma$ 
yields 
\begin{equation}\label{inequality1}
\int_T f^2\leq \frac{1}{n-1}
\int_{\partial T} f^2 - 
2\int_{S^{n-2}}d\sigma\int_0^\ell ds \int_0^{R(\sigma,s)} ff^\prime 
a(\sigma,s,\rho) d\rho\notag \end{equation}
where $f^\prime$ is the derivative of $f$ in direction of the 
radial variable $\rho$.

By the assumption in the lemma, 
we have 
\[\int_{\partial T} f^2 \leq \int_Tf^2\]
and therefore taking absolute values 
and using Lemma \ref{volumegrowth} once more, 
we obtain
\begin{align}
\frac{n-2}{n-1}\int_T f^2  & \leq  
\frac{2}{n-1}
\int_{S^{n-2}}d\sigma\int_0^\ell ds \int_0^R \vert  ff^\prime \vert j(\sigma,s,\rho) d\rho
\notag\\ & =
\frac{2}{n-1}\int_T \vert  ff^\prime \vert 
\leq \frac{2}{n-1}
(\int_T f^2)^{1/2}(\int_T \Vert \nabla f\Vert^2)^{1/2} \notag\end{align}
where the last inequality follows from 
Schwarz's inequality and from $\vert f^\prime\vert \leq 
\Vert \nabla f\Vert$ (compare the proof of  
Lemma 1 of \cite{DR86}). Dividing by 
$\frac{2}{n-1}(\int_T f^2)^{1/2}$ and 
squaring the resulting inequality yields the lemma in the case that
$T$ is a Margulis tube.

The argument for a cusp is almost identical.
Thus let $T$ be such a cusp, with boundary $\partial T$. 
Then $\partial T$ is an closed manifold, 
and $T$ is diffeomorphic to $\partial T\times [0,\infty)$.
Let $\rho:T\to [0,\infty)$ be the radial distance from $\partial T$.

Let 
$f:T\to \mathbb{R}$ be a smooth square integrable function with 
$\int_Tf^2\geq \int_{\partial T}f^2$ and let 
$j:\partial T\times [0,\infty)\to 
(0,\infty)$ be the function which describes the volume form on $T$
as discussed above. The version of 
Lemma 2.1 for cusps and integration by parts is used as in the proof for
Margulis tubes.
Since $f$ is square integrable,
as $R\to \infty$ we have $\int_{\rho=R}f^2\to 0$ and hence
the same calculation as before yields 
(here $d\mu$ is the 
volume element on $\partial T$
 used in the lemma) 
\begin{align} \int_Tf^2 &=\int_{\partial T}d\mu
\int_{0}^\infty f^2 j(x,\rho)d\rho
=\lim_{R\to\infty}
\int_{\partial T}d\mu\int_{0}^R f^2j(x,\rho)d(\rho)\notag \\
&\leq \frac{1}{n-1}\bigl(
\int_{\partial T}f^2+2\int_{\partial T}d\mu
\int_0^\infty ff^\prime j(x,\rho) d\rho \bigr).\notag
\end{align}  
As before,   
this yields the required estimate. 
Note that the change of sign in this formula stems from the fact that
here $\rho$ is the radial distance from the boundary, while for Margulis
tubes, $\rho$ denoted the radial distance from the core geodesic which 
seems more natural in that context. 
\end{proof}

For a Margulis tube $T$ or a cusp, let $\hat T$ be the set of all points 
$x \in T$ whose radial distance from the boundary $\partial T$ is at least one. 
Thus if $T$ is Margulis tube, determined by a closed
geodesic and a function $R(\sigma,s)>0$ on the normal bundle of that geodesic, then 
using "coordinates" $(\sigma,s,\rho)$ as before,  
we have  $\hat T=\{(\sigma,s,\rho)\in T\mid \rho\leq R(\sigma,s)-1\}$. 
By our assumption on Margulis tubes,
$T-\hat T$ is diffeomorphic to $\partial T\times [0,1)$.
Write $\hat M_{\rm thick}=M-\cup \hat T$ where the union is over all 
Margulis tubes and cusps.

For a smooth square integrable function $f$ on $M$ denote by
\[{\cal R}(f)=\int_M \Vert \nabla f\Vert^2/\int_M f^2\]
the Rayleigh quotient of $f$.

\begin{lemma}\label{rayleigh}
Let $f:M\to \mathbb{R}$
be a smooth square integrable function with
Rayleigh quotient ${\cal R}(f)<  \frac{(n-2)^2}{12}$; then 
\[\int_{\hat M_{\rm thick}}f^2\geq \frac{1}{3}\int_M f^2.\]
\end{lemma}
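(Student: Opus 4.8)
The plan is to argue by contradiction: suppose $\int_{\hat M_{\rm thick}} f^2 < \frac13 \int_M f^2$, so that $\int_{\cup \hat T} f^2 > \frac23 \int_M f^2$. The idea is that each $\hat T$ is "buffered" from the rest of $M$ by the collar $T - \hat T \cong \partial T \times [0,1)$, and on such a collar the function cannot lose too much mass relative to its gradient, so a large value of $\int_{\hat T} f^2$ forces a large value of $\int_T \|\nabla f\|^2$ via Lemma \ref{partialintegration}. The point of the distance-one buffer (guaranteed by the adjustment following \eqref{mincontrol}) is to make the hypothesis $\int_T f^2 \ge \int_{\partial T} f^2$ of Lemma \ref{partialintegration} available after a harmless enlargement of the tube.

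Concretely, I would first observe that for each component $T$ of $M_{\rm thin}$ one can choose a distance threshold $r \in [0,1]$ such that the hypersurface $\{\rho_{\partial T} = r\}$ (pushing the boundary inward by $r$, or for a cusp the level set at distance $r$) carries $L^2$-mass of $f$ at most the average $\int_{T} f^2$ over the unit collar — more precisely, by an averaging/mean-value argument over $r \in [0,1]$ there is a level $r_T$ with $\int_{\{\rho = r_T\}} f^2 \le \int_{T - \hat T} f^2 \le \int_T f^2$ (using the coarea-type formula and the lower volume-growth bound of Lemma \ref{volumegrowth} to control the slice volumes). Let $T'$ be the part of $T$ inside that level set; then $T' \supseteq \hat T$, its boundary satisfies the hypothesis of Lemma \ref{partialintegration}, and that lemma gives $\int_{T'} f^2 \le \frac{4}{(n-2)^2} \int_{T'} \|\nabla f\|^2 \le \frac{4}{(n-2)^2}\int_T \|\nabla f\|^2$. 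Summing over all components $T$ of $M_{\rm thin}$ and using $\int_{\hat T} f^2 \le \int_{T'} f^2$ yields
\[
\sum_T \int_{\hat T} f^2 \le \frac{4}{(n-2)^2} \int_{M_{\rm thin}} \|\nabla f\|^2 \le \frac{4}{(n-2)^2}\int_M \|\nabla f\|^2 = \frac{4}{(n-2)^2}\,{\cal R}(f)\int_M f^2.
\]
Under the standing hypothesis ${\cal R}(f) < \frac{(n-2)^2}{12}$ the right-hand side is strictly less than $\frac13 \int_M f^2$, so $\int_{\cup \hat T} f^2 < \frac13 \int_M f^2$, contradicting the assumption. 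Hence $\int_{\hat M_{\rm thick}} f^2 = \int_M f^2 - \int_{\cup \hat T} f^2 \ge \frac23 \int_M f^2 \ge \frac13 \int_M f^2$ — in fact the stronger bound $\frac23$ — which certainly gives the claimed inequality.

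The step I expect to be the main obstacle is the selection of the intermediate level set $r_T$ and making rigorous the claim that one can simultaneously (i) land on a hypersurface on which $\int f^2$ is no larger than $\int_{T'} f^2$, so Lemma \ref{partialintegration} applies to $T'$, and (ii) retain all of $\hat T$ inside $T'$. The cleanest route is probably to avoid choosing a variable level and instead note that if $\int_T f^2 \ge \int_{\partial T} f^2$ already holds, apply Lemma \ref{partialintegration} to $T$ directly; and if not, then $\int_{\partial T} f^2 > \int_T f^2$, in which case the mass of $f$ on the unit collar $T - \hat T$ must already be comparable to (in fact dominate a fixed fraction of) $\int_{\hat T} f^2$ — because the slice volumes only grow as one moves from $\partial T$ toward $\hat T$ by Lemma \ref{volumegrowth}, so $\int_{\hat T} f^2$ cannot exceed $\int_{T-\hat T} f^2$ by an argument analogous to the one in Lemma \ref{partialintegration}'s proof. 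Either way one controls $\int_{\hat T} f^2$ by $\frac{4}{(n-2)^2}\int_T \|\nabla f\|^2$ up to the constant, and summing and comparing with $\frac{1}{12}$ closes the argument; I would take care that the numerical constant $3$ (rather than some other value) is exactly what drops out, which is why the threshold in the hypothesis is $(n-2)^2/12$ and not $(n-2)^2/4$.
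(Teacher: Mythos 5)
Your overall skeleton (contradiction, $\int_{\cup\hat T}f^2>\frac23\int_Mf^2$, Lemma \ref{partialintegration} as the engine) matches the paper, but the step you yourself flag as the obstacle is not closed by your proposed fix, and as stated it is wrong. In your ``cleanest route'' you split into the case $\int_Tf^2\geq\int_{\partial T}f^2$ (fine: apply Lemma \ref{partialintegration} to $T$) and the case $\int_{\partial T}f^2>\int_Tf^2$, where you claim that then $\int_{\hat T}f^2$ is dominated by a fixed multiple of the collar mass $\int_{T-\hat T}f^2$ ``because the slice volumes only grow as one moves from $\partial T$ toward $\hat T$.'' That monotonicity is backwards -- by Lemma \ref{volumegrowth} the Jacobian grows with distance from the core geodesic (decays with depth into a cusp), so slices shrink as you move from $\partial T$ into $\hat T$ -- and, more importantly, no purely volumetric statement can give the claimed domination: knowing only that the single boundary slice carries more $L^2$-mass than all of $T$ says nothing about how the interior mass is distributed between the unit collar and $\hat T$ (a function with a thin spike near $\partial T$, negligible on most of the collar, and moderate on $\hat T$ satisfies $\int_{\partial T}f^2>\int_Tf^2$ while $\int_{\hat T}f^2\gg\int_{T-\hat T}f^2$; it has large gradient, but your case-B argument never invokes the gradient). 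Your first route (averaging to find $r_T$ with $\int_{\{\rho=r_T\}}f^2\leq\int_{T-\hat T}f^2$) has the same defect you noticed: the hypothesis of Lemma \ref{partialintegration} for $T'=\{\rho\geq r_T\}$ requires the slice mass to be at most $\int_{T'}f^2$, not at most the collar mass, and these are not comparable.

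The paper closes exactly this gap by making the dichotomy about the existence of a good level anywhere in $[0,1]$, not only at the boundary: for each component $T_i$ one asks whether there is some $s_i\leq 1$ with $\int_{\{r_i=s_i\}}f^2\leq\int_{\{r_i\geq s_i\}}f^2$. If yes, Lemma \ref{partialintegration} applies to the subregion $\{r_i\geq s_i\}\supseteq\hat T_i$. If no, then the reverse inequality holds for \emph{every} $s\in[0,1]$, and integrating it over $s$ gives $\int_{T_i\cap A}f^2\geq\int_{T_i-A}f^2$, i.e. the unit collar $A\subset\hat M_{\rm thick}$ carries at least the deep mass; the proof then splits the total mass between the two groups of tubes, contradicting either ${\cal R}(f)<(n-2)^2/12$ or the assumption $\int_{\hat M_{\rm thick}}f^2<\frac13\int_Mf^2$ (this is also where the constants $\frac13$ and $\frac{(n-2)^2}{12}$ come from). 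To repair your write-up you would need to replace your boundary-only dichotomy by this level-by-level one and add the integration-over-$s$ step; note also that your hoped-for stronger conclusion $\int_{\hat M_{\rm thick}}f^2\geq\frac23\int_Mf^2$ does not survive, since the second branch only returns mass to the collar, not a gradient bound.
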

\begin{proof}
By our assumption on the components of the thin part of $M$, the set
\[A=\hat M_{\rm thick}-M_{\rm thick}\]  is a union of \emph{shells},
i.e. submanifolds of $M$ with boundary which either are
diffeomorphic to $S^{n-2}\times S^1\times [0,1]$
(if the component is a Margulis tube) or to 
$N\times [0,1]$ (if the component is a cusp),
where
$N$ is a quotient of a horosphere by a rank
$n-1$ parabolic subgroup of the isometry group of 
the universal covering $\tilde M$ of $M$.

Let $f:M\to \mathbb{R}$ be a smooth function with 
${\cal R}(f)<\frac{(n-2)^2}{12}$. We want to show that
\begin{equation}\label{integral}
\int_{\hat M_{\rm thick}}f^2\geq 
\frac{1}{3}\int_M f^2,\notag\end{equation} 
and to this end we assume to the contrary 
that $\int_{\hat M_{\rm thick}}f^2<
\frac{1}{3}\int_M f^2$.

Recall that $M_{\rm thin}$ is a disjoint union 
of a finite number of 
Margulis tubes and cusps, say 
$M_{\rm thin}=\cup_{i=1}^kT_i$.
For each of these tubes and cusps $T_i$, let $r_i$ be the radial distance
function to the boundary hypersurface, i.e. $r_i(x)$ is 
the length of the radial arc
connecting the point $x\in T$ to $\partial T$. By reordering we may assume that 
there exists a number $p\leq k$ such that 
for all $i\leq p$, there is some 
$s_i\leq 1$ such that
\[\int_{\{r_i=s_i\}\cap T_i}f^2\leq \int_{T_i\cap \{r_i\geq s_i\}}f^2\]
and that for $i>p$, such an $s_i$ does not exist.
Here we use the volume element on the hypersurfaces
$\{r_i=s_i\}$ as in Lemma \ref{partialintegration}. 

We distinguish two cases. In the first case,
$\sum_{i=1}^p\int_{T_i-A}f^2\geq \frac{1}{3}\int_Mf^2$.
Lemma \ref{partialintegration} then shows that
\begin{align}
\int_M\Vert \nabla f\Vert^2\geq 
\sum_{i=1}^p\int_{T_i\cap \{r_i\geq s_i\}}\Vert \nabla f\Vert^2 \notag \\
\geq \frac{(n-2)^2}{4}
\sum_{i=1}^p\int_{T_i\cap \{r_i\geq s_i\}}f^2\geq 
\frac{(n-2)^2}{12}\int_M f^2.\notag\end{align}
Thus we have
${\mathcal R}(f)\geq \frac{(n-2)^2}{12}$ 
which contradicts our assumption on $f$.

In the second case, we have
$\sum_{i=1}^p\int_{T_i-A}f^2< \frac{1}{3}\int_Mf^2$.
Then 
\begin{equation}\label{eqlemma42}
\sum_{i=p+1}^k\int_{T_i-A}f^2\geq \frac{1}{3}\int_M f^2.
\end{equation} 
But for each $i>p$, integration of the defining equation
\begin{equation}\label{integral2}
\int_{T_i\cap \{r_i=s\}}f^2\geq 
\int_{T_i\cap \{r_i\geq s\}}f^2\end{equation}
over the shell $T_i\cap A=\{0\leq r_i\leq 1\}$ yields
\[\int_0^1ds \int_{T_i\cap \{r_i=s\}}f^2=
 \int_{T_i\cap A}f^2\geq \int_{T_i-A}f^2.\]
Summing over $i\geq p+1$ and using inequality
(\ref{eqlemma42}), we obtain
\[\int_{\cup_{i=p+1}^kT_i\cap A}f^2\geq \sum_{i=p+1}^k\int_{T_i-A}f^2
\geq \frac{1}{3}\int f^2.\]
As $A\subset \hat M_{\rm thick}$, this 
contradicts the assumption on $f$. The
lemma follows. 
\end{proof}

The next proposition completes the proof of Theorem \ref{thm1} for a choice of a Margulis
constant so that $\hat M_{\rm thick}$ as defined above is contained in the
thick part of $M$ for this constant.

\begin{proposition}\label{lowerbound}
For all $k\geq 0$ we have 
\[\lambda_k(M)\geq \min\{
\frac{1}{3}\lambda_k(\hat M_{\rm thick}),
\frac{(n-2)^2}{12}\}\]
for any finite volume oriented Riemannian mani\-fold $M$ of dimension
$n\geq 3$ and curvature $\kappa\in [-b^2 -1]$.  
\end{proposition}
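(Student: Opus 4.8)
The plan is to combine Lemma~\ref{rayleigh} with the min-max characterization of eigenvalues. If $\lambda_k(M)\ge \frac{(n-2)^2}{12}$ there is nothing to prove, so I would assume $\lambda_k(M)<\frac{(n-2)^2}{12}$. Since $\frac{(n-2)^2}{12}<\frac{(n-1)^2}{4}$ and the bottom of the essential spectrum of $M$ is bounded below by $\frac{(n-1)^2}{4}$ (the curvature of $M$ is at most $-1$; cf.\ Corollary~3.2 of \cite{H04}), the numbers $\lambda_0\le\lambda_1\le\dots\le\lambda_k$ are genuine eigenvalues of $\Delta$ on $M$, with $L^2$-eigenfunctions $\phi_0,\dots,\phi_k$ that are smooth by elliptic regularity. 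I would set $V=\mathrm{span}\{\phi_0,\dots,\phi_k\}$, a $(k+1)$-dimensional space of smooth square integrable functions, so that every $f\in V\setminus\{0\}$ satisfies $\mathcal{R}(f)\le\lambda_k(M)<\frac{(n-2)^2}{12}$.

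Next I would restrict from $V$ to $\hat M_{\rm thick}$. By Lemma~\ref{rayleigh}, every such $f$ obeys $\int_{\hat M_{\rm thick}}f^2\ge\frac13\int_M f^2$; in particular the linear restriction map $V\to C^\infty(\hat M_{\rm thick})$, $f\mapsto f|_{\hat M_{\rm thick}}$, is injective, so its image $W$ is again $(k+1)$-dimensional. Moreover, using $\hat M_{\rm thick}\subset M$ and $\Vert\nabla f\Vert^2\ge 0$, for every $f\in V\setminus\{0\}$
\[
\frac{\int_{\hat M_{\rm thick}}\Vert\nabla f\Vert^2}{\int_{\hat M_{\rm thick}}f^2}\ \le\ \frac{\int_M\Vert\nabla f\Vert^2}{\tfrac13\int_M f^2}\ =\ 3\,\mathcal{R}(f)\ \le\ 3\lambda_k(M).
\]

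Finally, $\hat M_{\rm thick}$ is a compact manifold with smooth boundary: the boundaries $\partial T$ are smooth by Theorem~\ref{thmbcd93}, the shells $T-\hat T\cong\partial T\times[0,1)$ are bounded, and the portions $\hat T$ removed from cusps are exactly their unbounded ends. Hence the Neumann spectrum of $\hat M_{\rm thick}$ is discrete and given by min-max over $(k+1)$-dimensional subspaces of $H^1(\hat M_{\rm thick})$; applying this to $W$, whose Rayleigh quotients are all $\le 3\lambda_k(M)$, yields $\lambda_k(\hat M_{\rm thick})\le 3\lambda_k(M)$, which is the assertion. The one point that needs genuine care — and the structural reason the threshold $\frac{(n-2)^2}{12}$ appears — is the passage through min-max on the possibly non-compact $M$: it is legitimate precisely because the relevant eigenvalues lie below $\frac{(n-1)^2}{4}$, hence below the essential spectrum, so that they are realized by $L^2$-eigenfunctions. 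Everything else is bookkeeping once the lower bound on the $L^2$-mass over $\hat M_{\rm thick}$ supplied by Lemma~\ref{rayleigh} is in hand.
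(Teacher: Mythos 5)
Your proposal is correct and follows essentially the same route as the paper: both use the essential-spectrum bound from Corollary 3.2 of \cite{H04} to get a $(k+1)$-dimensional span of $L^2$-eigenfunctions, then Lemma \ref{rayleigh} to show the restriction to $\hat M_{\rm thick}$ is injective and loses at most a factor $3$ in the Rayleigh quotient, and finally the Neumann min-max principle on $\hat M_{\rm thick}$. The only cosmetic difference is that the paper bounds one extremal function in the restricted space from below by $\lambda_k(\hat M_{\rm thick})$ while you bound all Rayleigh quotients in $W$ from above by $3\lambda_k(M)$; these are the same min-max argument.
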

\begin{proof} Let $M$ be a finite volume oriented Riemannian manifold of dimension
$n\geq 3$ and curvature $\kappa \leq -1$.
We use the previous notations.

If $M$ is non-compact, then by Corollary 3.2 of \cite{H04}, 
the bottom of the essential 
spectrum of $M$ is not smaller than $(n-1)^2/4$. Thus the intersection of the spectrum of 
$M$ with the interval $[0,\frac{(n-2)^2}{12}]$ consists of a finite number of eigenvalues.

Let ${\cal H}(M)$ be the Sobolev space 
of square integrable functions on $M$, with square integrable weak 
derivatives. 
Let $k>0$ be such that $\lambda_{k}(M)<(n-2)^2/12$. 
Let $m\leq k-1$ be the largest number so that $\lambda_m(M)<\lambda_k(M)$.
Note that as we count eigenvalues with multiplicities,
we may have $m<k-1$. Choose a $k-m$-dimensional subspace 
$E_k\subset {\cal H}(M)$ of the eigenspace for the eigenvalue $\lambda_k(M)$ and define 
$E=V\oplus E_k$ where $V$ is the direct sum
of the eigenspaces for eigenvalues strictly smaller than $\lambda_k(M)$.
In particular, 
$E$ is a $k+1$-dimensional linear subspace of the Hilbert space ${\cal H}(M)$.

By construction, $\hat M_{\rm thick}$ is a smooth manifold with smooth boundary. 
Denote by ${\cal H}(\hat M_{\rm thick})$ the Sobolev space 
of square integrable functions
on $\hat M_{\rm thick}$ with square integrable weak derivatives. 
Here the weak derivative of a function $f$ on $\hat M_{\rm thick}$
is a vector field $Y$ so that
\[\int_{\hat M_{\rm thick}} \langle Y,X\rangle=
-\int_{\hat M_{\rm thick}}f{\rm div}(X)\]
for all smooth vector fields $X$ on $\hat M_{\rm thick}$ with compact
support in the interior of $\hat M_{\rm thick}$.
Green's formula implies that
${\cal H}(\hat M_{\rm thick})$ contains all functions 
$f$ on $\hat M_{\rm thick}$ which are smooth up to and including the 
boundary.

As smooth functions are dense in ${\cal H}(M)$ and 
${\cal H}(\hat M_{\rm thick})$, there 
is a natural linear one-Lipschitz restriction map
$\Pi:{\cal H}(M)\to {\cal H}(\hat M_{\rm thick})$. We denote by 
$W$ the image of the linear subspace $E$ under $\Pi$. 

We claim that
${\rm dim}(W)=k+1$. To this end assume otherwise. Then there is 
a normalized 
function $f\in E$ (i.e. $\int_M f^2=1$) 
whose restriction to $\hat M_{\rm thick}$ vanishes.
But by the definition of $E$, the Rayleigh quotient 
${\cal R}(f)$ of $f$ is at most $\lambda_k(M)<(n-2)^2/12 $ and therefore 
Lemma \ref{rayleigh} shows that $\int_{\hat M_{\rm thick}}f^2\geq \frac{1}{3}\int_M f^2$.

As ${\rm dim}(W)=k+1$ and as 
${\cal H}(\hat M_{\rm thick})$ is the function space for 
$\hat M_{\rm thick}$ with Neumann boundary conditions (see p.14-17 in \cite{C84}),  
Rayleigh`s theorem shows that 
there exists a normalized function $f\in E$ 
with ${\cal R}(\Pi(f))={\cal R}(f\vert \hat M_{\rm thick})
\geq\lambda_k(\hat M_{\rm thick})$.
Furthermore, as $f\in E$, we have 
${\cal R}(f)\leq \lambda_k(M)$. Note that $f$ and $\Pi(f)$ are smooth. 

By Lemma \ref{rayleigh}, we have
$\int_{\hat M_{\rm thick}}f^2\geq \frac{1}{3}\int_M f^2=\frac{1}{3}$ and hence
\[\lambda_k(M)\geq {\cal R}(f)\geq \int_{\hat M_{\rm thick}}\Vert \nabla f\Vert^2 
\geq \frac{1}{3}{\cal R}(f\vert_{\hat M_{\rm thick}})\geq 
\frac{1}{3}\lambda_k(\hat M_{\rm thick}).\] 
This is what we wanted to show.
\end{proof}

As an easy consequence, we obtain the estimate of Schoen \cite{S82}.

\begin{corollary}\label{eigenbelow}
For every $n\geq 3$ and every $b\geq 1$ 
there exists a number $\chi(n,b)>0$ such that 
\[\lambda_1(M)\geq \frac{\chi(n,b)}{{\rm vol}(M)^2}\]
for every finite volume Riemannian manifold $M$ of 
dimension $n$ and curvature
$\kappa\in [-b^2,-1]$. 
\end{corollary}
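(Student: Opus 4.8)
The plan is to deduce Corollary \ref{eigenbelow} from Proposition \ref{lowerbound} (equivalently Theorem \ref{thm1}) by combining it with the known lower bound for $\lambda_1$ of the \emph{thick part} in terms of its volume. First I would recall that, since $\lambda_1(M) \geq \min\{\tfrac{1}{3}\lambda_1(\hat M_{\rm thick}), \tfrac{(n-2)^2}{12}\}$, it suffices to bound $\lambda_1(\hat M_{\rm thick})$ from below by a constant times $\mathrm{vol}(M)^{-2}$; the alternative $\tfrac{(n-2)^2}{12}$ is a fixed constant and is certainly at least $\chi(n,b)/\mathrm{vol}(M)^2$ once $\chi$ is chosen small (we may freely assume $\mathrm{vol}(M)$ is bounded below since $M$ has finite volume $\geq$ a Margulis-type constant, or simply absorb this into $\chi$).

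The key step is the lower bound on $\lambda_1(\hat M_{\rm thick})$. Here $\hat M_{\rm thick}$ is a compact connected manifold with smooth boundary whose injectivity radius and whose boundary geometry are controlled by constants depending only on $n$ and $b$ (by Theorem \ref{thmbcd93} and the surrounding discussion), and whose diameter is therefore bounded by a fixed multiple of its volume. The cleanest route is to invoke the quasi-isometry to a bounded-valence graph $G$ with $|G|$ comparable to $\mathrm{vol}(M_{\rm thick})$, together with the comparison $\lambda_1(\hat M_{\rm thick}) \asymp \lambda_1(G)$ from \cite{Man05}, and then use the elementary fact that the first nonzero eigenvalue of the graph Laplacian of a connected bounded-valence graph on $N$ vertices is at least $c/N^2$ (a Cheeger/diameter estimate for graphs). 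Alternatively, and perhaps more self-containedly, one can argue directly: $\hat M_{\rm thick}$ has bounded local geometry and bounded-geometry boundary, so a standard Cheeger-type argument (as in \cite{DR86,S82}) bounds the Neumann isoperimetric constant $h(\hat M_{\rm thick})$ from below by $c(n,b)/\mathrm{vol}(\hat M_{\rm thick})$, and then $\lambda_1(\hat M_{\rm thick}) \geq h^2/4 \geq c'(n,b)/\mathrm{vol}(M)^2$ using \eqref{volume} to replace $\mathrm{vol}(\hat M_{\rm thick})$ by $\mathrm{vol}(M)$.

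I would then assemble the pieces: set $\chi(n,b)$ to be the minimum of $\tfrac{1}{3} c'(n,b)$ (or the analogous constant coming from the graph argument) and a small constant making the $\tfrac{(n-2)^2}{12}$ alternative harmless, and conclude
\[
\lambda_1(M) \;\geq\; \min\Bigl\{\tfrac{1}{3}\lambda_1(\hat M_{\rm thick}),\ \tfrac{(n-2)^2}{12}\Bigr\} \;\geq\; \frac{\chi(n,b)}{\mathrm{vol}(M)^2}.
\]
The main obstacle, and the only nontrivial point, is establishing the $c/N^2$-type lower bound for $\lambda_1$ of the thick part purely in terms of its volume — i.e. verifying that the local geometry of $\hat M_{\rm thick}$ (interior injectivity radius and boundary second fundamental form / boundary injectivity radius) is uniformly controlled so that either the graph approximation or a direct Cheeger estimate applies. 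All of this control is already available from Theorem \ref{thmbcd93} and the discussion following it, so the argument is short; one should simply be careful that the Cheeger inequality is used in its Neumann form (so no boundary term appears) and that connectedness of $M_{\rm thick}$, noted in the introduction, is what prevents the isoperimetric constant from degenerating faster than $1/\mathrm{vol}$.
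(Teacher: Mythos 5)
Your proposal is correct and follows essentially the same route as the paper: apply Theorem \ref{thm1}, note that $\mathrm{vol}(M)$ is bounded below (via an embedded $\epsilon$-ball in the nonempty thick part) so the constant alternative $(n-2)^2/12$ can be absorbed into $\chi/\mathrm{vol}(M)^2$, and bound $\lambda_1(M_{\rm thick})$ from below by combining the quasi-isometry to a connected bounded-valence graph $G$ with $|G|\sim\mathrm{vol}(M)$ \cite{Man05} with the graph Cheeger inequality $\lambda_1(G)\geq h(G)^2/2\gtrsim 1/|G|^2$. The only cosmetic difference is your optional direct Cheeger argument on $\hat M_{\rm thick}$, which the paper does not pursue.
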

\begin{proof} Let $\epsilon >0$ be a Margulis constant for Riemannian 
manifolds $M$ of dimension $n\geq 3$ and curvature in the interval
$[-b^2,-1]$. As $M_{\rm thick}\not=\emptyset$, the manifold $M$ contains
an embedded ball of radius $\epsilon$ which is isometric to a ball of the
same radius in a simply connected manifold of curvature contained in 
$[-b^2,-1]$. By comparison, the volume of such a ball is bounded from 
below by a universal constant $a(n,b)$ only depending on $n$ and $b$ and hence
the volume of every Riemannian $n$-manifold of curvature in $[-b^2,-1]$ is bounded 
from below by 
$a(n,b)$. 
Thus Theorem \ref{thm1} shows that 
$\lambda_1(M)\geq \min\{a(n,b)^2(n-2)^2/12{\rm vol}(M)^2,
\lambda_1(M_{\rm thick})/3\}$.

The manifold with boundary $M_{\rm thick}$ 
is uniformly quasi-isometric to a finite connected graph $G$,
and its first nontrivial eigenvalue 
$\lambda_1(M_{\rm thick})$ with Neumann boundary conditions
is uniformly equivalent to the first nontrivial eigenvalue of the 
graph Laplacian \cite{Man05} (see also Lemma 2.1 of \cite{LS12} for an explicit
formulation of this fact).

Now the first eigenvalue $\lambda_1(G)$ 
of a finite graph $G$ satisfies $\lambda_1(G)\geq h^2(G)/2$ where
$h(G)$ is the so-called \emph{Cheeger constant} of the graph \cite{Chu96}.
This Cheeger constant is defined to be 
\[\min\frac{\vert E(U,U^\prime)\vert}{\min\{\vert U\vert,\vert U^\prime\vert\}}\]
where $U$ is a subset of the vertex set ${\mathcal V}(G)$ of $G$, 
$U^\prime={\mathcal V}(G)-U$ and where $E(U,U^\prime)$ is the set of 
all edges which connect a vertex in $U$ to a vertex in $U^\prime$. 
As $G$ is connected, 
this Cheeger constant is at least $1/\vert {\cal V}(G)\vert \sim
1/{\rm vol}(M)$ which yields the corollary. 
\end{proof}

\section{Bounding small eigenvalues from above}\label{above}

In this section we restrict to the investigation of orien\-ted 
hy\-per\-bolic 
3-manifolds of finite volume. 
Our goal is to prove Theorem \ref{thm2}.

We begin with analyzing in more detail functions on Margulis tubes 
in such a hyperbolic $3$-manifold $M$. 
For a sufficiently
small Margulis constant
$\epsilon>0$, the boundary $\partial T$ of each such tube $T$ is a flat
torus whose injectivity radius for the induced metric 
roughly equals $\epsilon$ \cite{BCD93}. 
Furthermore, radial geodesics intersect $\partial T$ orthogonally. 
Note that an analogous statement is
not true in higher dimensions.

The \emph{radius} ${\rm rad}(T)$ 
of the tube $T$ is the distance of $\partial T$ to 
the core geodesic $\gamma$. The following lemma is only valid
in dimension three.

\begin{lemma}\label{radiusbound}
There is a number $q_1=q_1(\epsilon)>0$ only depending on $\epsilon$ such that
${\rm rad}(T)\geq \log {\rm vol}(\partial T)-q_1$. 
\end{lemma}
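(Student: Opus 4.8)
The plan is to use the explicit geometry of a hyperbolic Margulis tube in dimension three. Let $\gamma$ be the core geodesic of $T$, of length $\ell < 2\epsilon$, and let $R = \operatorname{rad}(T)$ be the radial distance from $\gamma$ to $\partial T$. In a hyperbolic $3$-manifold, the tube $T$ is isometric to a quotient of a metric tube around a geodesic in $\mathbb{H}^3$, and the hypersurface $\{\rho = r\}$ at radial distance $r$ from $\gamma$ carries the intrinsic flat metric obtained by scaling the length parameter along $\gamma$ by $\cosh r$ and the angular parameter $\sigma$ by $\sinh r$. Consequently the area of $\{\rho = r\}$ equals $\ell \cdot 2\pi \sinh r \cosh r$ (up to the angular normalization), and in particular
\[
\operatorname{vol}(\partial T) = \ell \cdot 2\pi \sinh R \cosh R.
\]
Here I would use that in dimension three the boundary $\partial T$ meets the radial geodesics orthogonally (stated in the excerpt), so the ``volume element'' used elsewhere in the paper coincides with the genuine Riemannian area of $\partial T$, and there is no discrepancy to track.

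Next I would solve for $R$. From the displayed identity,
\[
\log \operatorname{vol}(\partial T) = \log \ell + \log(2\pi) + \log(\sinh R \cosh R) \le \log \ell + \log(2\pi) + 2R,
\]
using $\sinh R \cosh R = \tfrac12 \sinh(2R) \le \tfrac12 e^{2R}$. Since $\ell < 2\epsilon$ this gives $\log \operatorname{vol}(\partial T) \le 2R + \log(4\pi\epsilon)$, i.e. $R \ge \tfrac12\log\operatorname{vol}(\partial T) - \tfrac12\log(4\pi\epsilon)$. To get the cleaner bound $R \ge \log\operatorname{vol}(\partial T) - q_1$ claimed in the statement I would instead bound $\operatorname{vol}(\partial T)$ below in terms of $R$ alone: because the injectivity radius of $\partial T$ is comparable to $\epsilon$ (cited in the excerpt from \cite{BCD93}), the short generator of $\pi_1(\partial T)$ has length bounded below by a constant $c_0(\epsilon) > 0$; this short curve is the one winding around $\gamma$, whose length is $\ell \sinh R$ after the appropriate normalization, while the complementary direction has length at least that of the core, so $\operatorname{vol}(\partial T)$ is comparable to $(\ell \sinh R)$ times something bounded, giving $\operatorname{vol}(\partial T) \le C(\epsilon) e^{R}$ and hence $R \ge \log\operatorname{vol}(\partial T) - q_1(\epsilon)$ with $q_1 = \log C(\epsilon)$.

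The main obstacle I anticipate is bookkeeping the normalization of the angular coordinate $\sigma$ and, relatedly, pinning down which of the two curvature/length directions on the flat torus $\partial T$ dominates its area — that is, confirming that $\operatorname{vol}(\partial T)$ grows like $e^{R}$ rather than $e^{2R}$ once one accounts for the constraint that the meridian length stays bounded below by the Margulis-type estimate. I would handle this by working directly with the two standard generators of $\pi_1(\partial T) \cong \mathbb{Z}^2$: the meridian (bounding a disk in $T$), whose length at radius $R$ is $2\pi\sinh R$ times the reciprocal of the angular normalization and which is bounded below by $c_0(\epsilon)$, and the longitude, whose length is governed by $\ell\cosh R$ together with a translation term; multiplying the two and using $\ell < 2\epsilon$ and $\sinh R, \cosh R \le e^R$ yields the upper bound on $\operatorname{vol}(\partial T)$ that inverts to the desired lower bound on $R$. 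Everything else is the elementary identity $\sinh R\cosh R \le \tfrac12 e^{2R}$ and the cited facts, so no further estimates are needed.
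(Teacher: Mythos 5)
Your overall setup is the same as the paper's (the explicit flat geometry of the boundary torus, the identity ${\rm vol}(\partial T)=2\pi\ell\sinh R\cosh R$ with $\ell<2\epsilon$ the core length, and Margulis-type control of short curves on $\partial T$), but the decisive step is missing and the substitute you propose cannot supply it. To turn the area formula into ${\rm vol}(\partial T)\le C(\epsilon)e^{R}$ you need an \emph{upper} bound on $\ell\cosh R$, and every fact you invoke — injectivity radius of $\partial T$ bounded below by $c_0(\epsilon)$, meridian length $2\pi\sinh R$ bounded below, longitude at least as long as the core — is a lower bound; lower bounds on the generators cannot bound the area of the torus from above. Your own closing computation (``multiplying the two and using $\ell<2\epsilon$ and $\sinh R,\cosh R\le e^{R}$'') only yields ${\rm vol}(\partial T)\le C\epsilon\, e^{2R}$, i.e.\ exactly the weak estimate $R\ge\tfrac12\log{\rm vol}(\partial T)-C$ that you had already discarded as insufficient. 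There is also a length misidentification: the curve on $\partial T$ winding in the core direction has length at least $\ell\cosh R$ (the distance on the boundary cylinder between the circles $s=0$ and $s=\ell$), not $\ell\sinh R$, while $2\pi\sinh R$ is the meridian length.

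The missing idea, which is the heart of the paper's proof, is an upper bound on the systole of $\partial T$: because $\partial T$ bounds the $\epsilon$-thin part and has uniformly bounded second fundamental form (Theorem \ref{thmbcd93}), the shortest closed geodesic of the flat torus $\partial T$ has length at most $a\epsilon$ for a universal constant $a$. For $R$ large the meridian has length $2\pi\sinh R>a\epsilon$, so this short geodesic cannot be a multiple of the meridian; it lifts to a straight segment on the boundary cylinder joining $\{s=0\}$ to $\{s=k\ell\}$ for some $k\ge1$ and hence has length at least $\ell\cosh R$, giving $\ell\cosh R\le a\epsilon$. Plugging this into ${\rm vol}(\partial T)=2\pi\sinh R\,(\ell\cosh R)$ yields $\sinh R\ge {\rm vol}(\partial T)/(2\pi a\epsilon)$ and thus $R\ge\log{\rm vol}(\partial T)-q_1(\epsilon)$. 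Without this upper systole bound and the dichotomy forcing the short curve into the longitudinal direction, your plan does not reach the stated inequality.
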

\begin{proof} 
Write $R={\rm rad}(T)$. We may assume that $R>1$.
The tube $T$ is isometric to the quotient of the tubular
neighborhood $N(\tilde \gamma,R)$ of radius $R$ 
of a geodesic $\tilde \gamma$ in $\mathbb{H}^3$ under
an infinite cyclic group of loxodromic isometries. Up to conjugation, the generator
$\psi$ of this group 
is determined by its complex translation length $\chi\in \mathbb{C}$
with $\Re(\chi)=\ell<2\epsilon$. Here $\ell$ is the length of the core geodesic of $T$.

The boundary $\partial N(\tilde \gamma, R)$ of $N(\tilde \gamma,R)$ 
is a flat two-sided infinite cylinder of circumference $2\pi \sinh R$. 
For a fixed identification of the fibre
of the unit normal circle bundle of $\tilde \gamma$ over the point
$\tilde \gamma(0)$ with the unit circle $S^1$, parallel transport along $\tilde\gamma$ and the 
normal exponential map determine global ``coordinates'' on 
$\partial N(\tilde \gamma,R)$. These ``coordinates'' consist in a 
diffeomorphism 
$S^1\times \mathbb{R}\to \partial N(\tilde \gamma,R)$. The isometry $\psi$   
identifies the meridian
$\exp(R S^1\times \{0\})$ 
with the meridian $\exp(R S^1\times \{\ell\})$   
by an isometry
which is given by rotation with angle $\Im \chi$
in these coordinates. In particular, 
the volume of the boundary torus $\partial T$ of $T$ 
equals 
\begin{equation}\label{volumeformula}
{\rm vol}(\partial T)=2\pi\ell \sinh R\cosh R
\end{equation} 
and is independent of 
$\Im \chi$. 

The second fundamental form of the torus 
$\partial T$ is uniformly bounded, independent of $R>1$. 
This implies  that 
the length of
the shortest geodesic on $\partial T$ is contained
in an interval $[\epsilon,a\epsilon]$ where $a>0$ is a
universal constant (compare Theorem 2.14  and the 
following discussion in \cite{BCD93}).
As we are only interested in tubes of large radius, 
we may assume that the length
$2\pi \sinh R$ of the meridian of 
$\partial T$ is bigger than $a\epsilon$. 

Now the distance on $\partial N(\tilde \gamma,R)$ 
for the intrinsic path metric between the 
circles corresponding to the coordinates $s=0,s=\ell$ 
equals $\ell\cosh R$. Here as before, $s$ is the length parameter
of $\gamma$. A shortest closed geodesic
on the flat torus 
$\partial T$ lifts to a straight line segment on
$\partial N(\tilde \gamma,R)$. Since $2\pi\sinh R>a\epsilon$, 
this line segment connects the circle $\{s=0\}$ to the circle
$\{s=k \ell\}$ for some integer $k\geq 1$. From this we conclude that
\[\ell\cosh R\leq a\epsilon.\]

From the formula (\ref{volumeformula})
for ${\rm vol}(\partial T)$, we deduce that
$\sinh R\geq \frac{1}{2\pi a\epsilon}{\rm vol}(\partial T)$ and hence 
\[e^R\geq \frac{1}{\pi a\epsilon}{\rm vol}(\partial T)\]
which is what we wanted to show.
\end{proof}

\begin{remark} 
For hyperbolic manifolds of dimension $n\geq 4$, 
Proposition 2 of \cite{BS87} states a reverse inequality:
Namely, if $U$ is any Margulis tube, and if $r(U)$ is
the largest distance of a point in $U$ to the boundary,
then ${\rm Vol}(U)\geq d_n\sinh(\frac{1}{3}r(U))$
where $d_n>0$ is a constant only depending on $n$.
The case ${\rm dim}(M)=3$ is very special as every point
on the boundary of a Margulis tube has the same distance
to the core geodesic. We refer to p.3 of \cite{BCD93} for more
information.
\end{remark} 

In the statement of the next proposition, we use the fact that 
given a Margulis tube $T$ of radius at least three, the volume of 
$T$ is uniformly proportional to the volume of its boundary 
$\partial T$.

\begin{proposition}\label{tube}
There exists a number $q_2=q_2(\epsilon)>0$ with the following
property. Let $T\subset M$ be a Margulis tube or a cusp 
with boundary $\partial T$.
Let $f:\partial T\to \mathbb{R}$ be a function 
(not neccessarily of zero mean) whose
Rayleigh quotient equals $d\geq 0$. 
Then there is an extension of $f$ to a smooth function $F$ on $T$ with 
the following properties.
\begin{enumerate}
\item $\frac{1}{4}\int_{\partial T}f^2\leq 
\int_TF^2\leq \frac{1}{2}\int_{\partial T}f^2.$
\item The 
Rayleigh quotient of $F$ is at most
$dq_2\log {\rm vol}(T)$. 
\item If $\int_{\partial T}f=0$ then $\int_T F=0$.
\end{enumerate}
\end{proposition}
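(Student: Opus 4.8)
The plan is to build $F$ by hand as a function that depends on the boundary value $f$ and the radial coordinate only, damped by an explicit profile $\phi(\rho)$, and then to estimate the three quantities directly. For a Margulis tube $T$ with core geodesic $\gamma$, radius $R = \operatorname{rad}(T)$ and ``coordinates'' $(\sigma,s,\rho)$ as in Section~\ref{below}, set
\[
F(\sigma,s,\rho) = \phi(\rho)\, \hat f(\sigma,s),
\]
where $\hat f$ is the value of $f$ at the radial projection of $(\sigma,s,\rho)$ to $\partial T$ (i.e.\ $F$ is constant along each radial geodesic, equal to the value of $f$ at its endpoint on $\partial T$), and $\phi\colon [0,R]\to[0,1]$ is a cutoff with $\phi(R)=1$ that decays going inward. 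The point of the $\log$ in the Rayleigh bound is that the volume element $j(\sigma,s,\rho)$ grows roughly like $e^{(n-1)\rho}=e^{2\rho}$ in dimension three, so most of the volume of $T$ sits within bounded distance of $\partial T$, and by Lemma~\ref{radiusbound} we have $R \le \log\operatorname{vol}(\partial T) + O(1)$; the ratio $\operatorname{vol}(T)/\operatorname{vol}(\partial T)$ is uniformly bounded, so $R \sim \log\operatorname{vol}(T) + O(1)$. For a cusp the identical construction works with $\rho$ the distance from $\partial T$, $R=\infty$, and the volume element decaying like $e^{-2\rho}$; then $\phi$ can be taken a fixed exponentially-decaying profile supported effectively in bounded radius, and there is no $\log$ loss — but since we only need an \emph{upper} bound by $dq_2\log\operatorname{vol}(T)$, this is harmless.

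First I would fix the profile: for the tube, take $\phi$ to be (a smoothing of) the function that is $1$ for $\rho \ge R-1$ and decays over the shell $[0,R]$, most naturally $\phi(\rho)=\min\{1, e^{-(R-\rho)}\}$ up to smoothing, so that $\int_0^R \phi(\rho)^2 j\,d\rho$ is comparable to $\int_{R-1}^R j\,d\rho$ by Lemma~\ref{volumegrowth}, which in turn is comparable to $j(R)$, the boundary volume density; integrating over $(\sigma,s)$ this gives $\int_T F^2 \asymp \int_{\partial T} f^2$, and the explicit constants $\tfrac14,\tfrac12$ in (1) are arranged by a final rescaling $F \mapsto cF$ with $c\in[\tfrac12,1]$ chosen so the ratio lands in $[\tfrac14,\tfrac12]$ (rescaling does not change the Rayleigh quotient or the mean-zero property). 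Second, for (2), split $\nabla F = \phi'\hat f\,\partial_\rho + \phi\,\nabla^{\perp}\hat f$ where $\nabla^\perp$ is the tangential gradient of the radially-extended function. The radial term contributes $\int (\phi')^2 \hat f^2 j$, again concentrated near $\partial T$ and comparable to $\int_{\partial T} f^2$ (here one uses $|\phi'|\le \phi \le 1$ for the exponential profile). The tangential term is the one carrying the Rayleigh quotient $d$ of $f$: one shows $\int_T \phi^2 \|\nabla^\perp \hat f\|^2 \le (\text{something})\cdot \int_{\partial T}\|\nabla_{\partial T} f\|^2$, where the ``something'' accounts for how tangential distances distort along radial geodesics as $\rho$ decreases from $R$. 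Third, claim (3) is immediate: $\int_T F = \int_{S^{n-2}}d\sigma\int_0^\ell ds \int_0^R \phi(\rho)\hat f(\sigma,s) j\,d\rho$, and since $\hat f(\sigma,s)$ is (a reparametrization of) $f$ on $\partial T$ with the boundary volume element, $\int_{\partial T} f = 0$ forces the inner $(\sigma,s)$-integral of $\hat f$ weighted appropriately to vanish — more carefully, one checks that $\int_T F = \bigl(\int_0^R \phi j\,d\rho\bigr)$-weighted average reduces, fibrewise, to a multiple of $\int_{\partial T} f$, using that $j$ and $\phi$ depend on $(\sigma,s)$ only through $R(\sigma,s)$ and that the radial-projection Jacobian is exactly the boundary volume element of Lemma~\ref{partialintegration}.

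The main obstacle is controlling the tangential gradient term in (2) — specifically, bounding $\|\nabla^\perp \hat f\|$ at radius $\rho < R$ in terms of $\|\nabla_{\partial T} f\|$ at the projected boundary point, since the metric $h(\rho)$ on the hypersurface $\{\rho = \text{const}\}$ is smaller than $h(R)$ going inward (distances contract toward the core), which is favorable, but one must make this precise using the Jacobi-field/Rauch comparison already invoked for Lemma~\ref{volumegrowth}, together with the bounded-geometry control on $\partial T$ from Theorem~\ref{thmbcd93}. Because $\rho$ ranges only over $[0,R]$ and $\phi^2 j$ is concentrated in $[R-1,R]$, the contraction factor is bounded on the effective support, so the tangential term is $\le C\int_{\partial T}\|\nabla_{\partial T} f\|^2$ with $C$ universal; combined with the radial term being $\le C'\int_{\partial T} f^2$ and the denominator $\int_T F^2 \asymp \int_{\partial T}f^2$, we get $\mathcal R(F) \le C''(d + 1)$. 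The remaining factor $\log\operatorname{vol}(T)$ — rather than just $d+1$ — appears to come in when $d$ is small: the additive $+1$ must be absorbed into $dq_2\log\operatorname{vol}(T)$, which requires either a Poincaré-type inequality on $\partial T$ (so that $\int_{\partial T} f^2 \le \lambda_1(\partial T)^{-1}\int_{\partial T}\|\nabla f\|^2 + (\text{mean})^2\operatorname{vol}(\partial T)$ and the mean term is killed when $\int_{\partial T}f=0$, or otherwise handled by the constant part of $f$ which extends with zero Rayleigh quotient) or a more careful profile $\phi$ whose radial energy is $O(1/R) = O(1/\log\operatorname{vol}(T))$ rather than $O(1)$ — the latter is the cleaner route, taking $\phi$ linear in $\rho$ on $[0,R]$ so that $|\phi'| = 1/R$ and the radial energy gains the crucial $1/R$ factor. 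I would pursue that linear-profile variant, at the cost of re-examining whether (1) still holds (it does, since $\int_0^R \rho^2 e^{2\rho}d\rho \asymp R^2 e^{2R} \cdot R^{-?}$ — one checks $\int_0^R (\rho/R)^2 j\,d\rho \asymp j(R)$ up to a factor $\asymp 1$, which is fine, possibly after again rescaling to hit $[\tfrac14,\tfrac12]$).
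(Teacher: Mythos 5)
Your construction applies a single radial cutoff profile $\phi$ to the whole boundary function $f$, and this is exactly where the argument breaks: the radial term $\int_T(\phi')^2\hat f^2$ contributes to the Rayleigh quotient an amount that carries no factor of $d$, while the required bound $d\,q_2\log\mathrm{vol}(T)$ is purely multiplicative in $d$. You notice the problem (the additive $+1$), but the route you choose to fix it --- the linear profile with $|\phi'|=1/R$ --- does not work. With that profile the radial energy is of order $R^{-2}\int_{\partial T}f^2$ against a denominator $\asymp\int_{\partial T}f^2$, so you would need $d\gtrsim R^{-3}$; but $d$ can be arbitrarily small: $d=0$ for constant $f$ (where the only admissible extension has zero Rayleigh quotient, i.e.\ is constant), and for mean-zero $f$ on the flat torus $\partial T$ one only has $d\gtrsim \mathrm{vol}(\partial T)^{-2}\sim e^{-2R}\ll R^{-3}$. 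The fix you mention only in passing is the actual proof in the paper: split $f=f_0+g$ with $f_0$ constant and $\int_{\partial T}g=0$, extend $f_0$ by the same constant (zero energy), and cut off only $g$ --- placing the cutoff transition not within distance $1$ of $\partial T$ but at depth $\approx\log\mathrm{vol}(\partial T)$ inside the tube, where the cross-sectional area has shrunk by a factor $\approx\mathrm{vol}(\partial T)^{-2}$. Then the cutoff's radial energy is $\lesssim \mathrm{vol}(\partial T)^{-2}\int_{\partial T}g^2\lesssim\int_{\partial T}\Vert\nabla g\Vert^2$ by the Poincar\'e inequality $\lambda_1(\partial T)\gtrsim\mathrm{vol}(\partial T)^{-2}$ for a flat torus of injectivity radius $\gtrsim\epsilon$, while the tangential energy contributes $(\mathrm{depth})\cdot\int_{\partial T}\Vert\nabla g\Vert^2\approx\log\mathrm{vol}(T)\int_{\partial T}\Vert\nabla g\Vert^2$, because in dimension $3$ the level surfaces are $2$-dimensional and radial projection is a homothety, under which the $2$-dimensional Dirichlet integral is invariant. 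That, and not the radial profile, is the sole source of the $\log$.

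Two further points. First, your heuristic about the tangential term goes the wrong way: moving inward the cross-sectional metric contracts, which makes the tangential gradient of the radially extended function \emph{larger} pointwise, not smaller; only the exact cancellation with the shrinking area element of the $2$-dimensional fibers keeps the level-by-level energy constant (in higher dimensions it would grow, which is why the statement is restricted to $3$-manifolds). Second, you cannot arrange the constants $\frac14,\frac12$ in (1) by a final rescaling $F\mapsto cF$: rescaling destroys the boundary condition $F\vert_{\partial T}=f$, which is what makes $F$ an extension and is needed later to glue $F$ to the eigenfunction on the thick part; the constants must instead be checked from the tube geometry, as in the paper via $\bigl(\sinh^2 R-\sinh^2(\theta+1)\bigr)/(2\sinh R\cosh R)\ge\frac14$ after adjusting the minimal tube radius.
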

\begin{proof} Let $\gamma$ be the core curve of the tube $T$.
Use the coordinates 
$(\sigma,s,\rho)$ on $T$, given by the angular coordinate
$\sigma$ on the unit normal circle over a point in $\gamma$, the length
parameter $s$ on $\gamma$ and the distance $\rho$ from $\gamma$. 
If $R>0$ is the radius of $T$ then 
the boundary $\partial T$ of $T$ is the surface
$\rho=R$. This boundary is a flat torus whose injectivity
radius equals $\epsilon$ up to a universal multiplicative
constant. 

By Lemma \ref{radiusbound}, 
the radius $R$ of the tube satisfies
\[-\theta=\log{\rm vol}(\partial T)  -q_1-1-R\leq -1\] where 
$q_1>0$ is a universal constant.  By the inequality
(\ref{mincontrol}) in Section 2, we 
may assume that 
$R-\theta \geq 3$.

Let $f:\partial T=\{\rho=R\}\to \mathbb{R}$ be a smooth function
with Rayleigh quotient $d\geq 0$. 
Decompose $f=f_0+g$ 
where $f_0$ is a constant function and $\int_{\partial T} g=0$. 
Extend $f_0$ to a constant
function $F_0$ on $T$, and extend the function $g$  
to a function $G:T\to \mathbb{R}$ by
\begin{equation}
G(\sigma,s,\rho)=
\begin{cases}
g(\sigma,s,R) &\text{if $\theta+1\leq \rho\leq R$;}\\
(\rho-\theta) g(\sigma,s,R) &\text{if 
$\theta\leq \rho\leq \theta +1$;}\\
0 &\text{otherwise.}
\end{cases}
\end{equation}
Then $F=F_0+G$ is continuous, smooth away from the hypersurfaces
$\rho=\theta+1$ and $\rho=\theta$, with uniformly bounded derivative.
Standard Sobolev theory then implies that $F$ is 
contained in the Sobolev space
of square integrable functions with square integrable weak derivative,
and $F\vert \partial T=f$.

As for each $r\in (0, R)$ 
the radial projection $(\sigma,s,r)\to (\sigma,s,R)$ 
of the torus $\{\rho =r\}$ onto the boundary torus $\partial T$ 
is a homothety, with dilation
$\sinh(R)\cosh(R)/\sinh(r)\cosh(r)$, we have $\int_TG =0$. 
Furthermore, 
integration using the explicit description of the metric on $T$
as a warped product metric (see Section \ref{below} for details) yields that
\begin{equation}\label{squarenorm}
\int_T (F_0+G)^2\in [\frac{1}{4}\int_{\partial T} (f+g)^2,\frac{1}{2}\int_{\partial T}(f+g)^2].
\end{equation}
Namely, as $(F_0+G)^2(\sigma,s,\rho)\leq f^2(\sigma,s,R)$ for all $\rho$, 
the upper bound follows from
\[\int_T(F_0+G)^2\leq \int_{\partial T}f^2\int_\theta^R\frac{\sinh \rho \cosh \rho}{\sinh R\cosh R}d\rho
=\frac{\sinh^2(R)-\sinh^2(\theta)}{2\sinh R\cosh R}\int_{\partial T}f^2.\]
To establish the lower bound, simply note that
\begin{align}
\int_T(F_0+G)^2\geq \int_{\partial T}f^2\int_{\theta+1}^R 
\frac{\sinh \rho \cosh \rho}{\sinh R\cosh R}d\rho\notag\\
=\frac{\sinh^2(R)-\sinh^2 (\theta+1)}{2\sinh R\cosh R}\int_{\partial T}f^2.
\notag\end{align}
Up to adjusting the requirement on the radius of the Margulis tube, we may
assume that $\sinh^2(\theta +1)\leq \frac{1}{4} \sinh(R)^2$ and
$\frac{3}{4}\sinh(R)\geq \frac{1}{2}\cosh(R)$
which then results in the 
estimate stated in the first part of the proposition. 

Now $\int_T\Vert \nabla(F_0+G)\Vert^2=\int_T\Vert \nabla G\Vert^2$
and therefore for the second part of the statement in the proposition, 
it suffices to show that
\begin{equation}\label{logestim}
  \int_T\Vert \nabla G\Vert^2\leq c_1(R-\theta) 
  \int_{\partial T}\Vert \nabla g\Vert^2\end{equation}
for a universal 
constant $c_1>0$. Namely, the discussion in the previous paragraph shows
that the integral of the constant function one over the shell
$\theta\leq \rho\leq R$ is bounded from below by 
$\frac{1}{4}{\rm vol}(\partial T)$. In particular, we have 
$\log{\rm vol}(T)\geq \log {\rm vol}(\partial T)-\log 4$. 
On the other hand, by the choice of 
$\theta$, we also have $R-\theta=\log{\rm vol}(\partial T)-q_1-1$. 

To establish the estimate (\ref{logestim})
recall from Chapter 2 of \cite{C84} that the first 
nonzero eigenvalue of $\partial T$ is not smaller than
$c_2/{\rm vol}(\partial T)^2$ where $c_2>0$ is a universal constant
(here we use that the injectivity radius of $\partial T$ is at least $\epsilon$).
In particular, by the definition of the number $\theta$, this eigenvalue is 
not smaller than $c_3e^{-2(R-\theta)}$ where $c_3>0$ is a universal constant.
As a consequence, we have 
\begin{equation}\label{rayleighbd}
\int_{\partial T}\Vert \nabla g\Vert^2\geq c_3e^{-2(R-\theta)}\int_{\partial T}g^2.
\end{equation}

Now for $r\in [\theta+1,R]$, the radial projection of 
the torus $\{\rho=r\}$ onto $\partial T$ 
scales the metric with a fixed constant. Moreover,
the directional derivative of $G$ in direction of the
radial vector field vanishes. This implies that   
\[\int_{\rho=r}\Vert \nabla G(\sigma,s,r)\Vert^2 =
\int_{\partial T} \Vert \nabla G(\sigma,s,R)\Vert^2\]
for $r\in [\theta +1,R]$ and hence
\[\int_{\theta+1\leq \rho\leq R} \Vert \nabla G(s,t,\rho)\Vert^2=
(R-\theta-1)\int_{\partial T}\Vert \nabla g\Vert^2.\]
On the other  hand, if
$\theta< r<\theta+1$ then 
\[\int_{\rho=r}\Vert \nabla G(s,t,\rho)\Vert^2 \leq 
\int_{\partial T}\Vert \nabla G(s,t,R)\Vert^2+\int_{\rho=r}G(s,t,\rho)^2.\]
Here the second term in this inequality is the contribution of the
derivative of $G$ in direction of the radial vector field, and we use
that this vector field is normal to the hypersurfaces
$\{\rho=r\}$. 

There is a universal constant $c_4>0$ such that for each $r\in 
[\theta,\theta+1]$ the volume of the level surface
$\{\rho=r\}$  is not bigger than 
$c_4e^{-2(R-\theta)}({\rm vol}(\partial T))$. Integration of this
inequality over the interval $[\theta,\theta +1]$ yields
$\int_{\theta\leq \rho\leq \theta +1}G^2\leq
c_4e^{-2(R-\theta)}\int_{\partial T}g^2$. Hence by the estimate
(\ref{rayleighbd}), 
\[\int_{\theta\leq \rho\leq \theta+1}G^2\leq 
\frac{c_4}{c_3}\int_{\partial T}\Vert \nabla g\Vert^2.\]
Together this
implies
\begin{equation}\label{secondbound}
\int_T\Vert \nabla G\Vert^2\leq (R-\theta +\frac{c_4}{c_3})
\int_{\partial T} \Vert \nabla g\Vert^2. \end{equation}
The estimates (\ref{squarenorm},\ref{secondbound}) 
together with the choice of $\theta$ show 
part (1) and (2) of the proposition. The third part is immediate from the 
fact that by construction, if $\int _{\partial T}f=0$ then $F=G$ and 
$\int_{T}G=0$.

The case that $T$ is a cusp is completely analogous but easier 
and will be omitted.
\end{proof}

As an immediate consequence of Proposition \ref{tube} we obtain

\begin{corollary}\label{neumanntube}
The first nonzero eigenvalue with Neumann boundary conditions
of a three-dimensional hyperbolic Margulis tube or a cusp is at most
$q_3\log{\rm vol}(T)/{\rm vol}(\partial T)^2$
where $q_3>0$ is a universal constant. 
\end{corollary}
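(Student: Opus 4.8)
The plan is to feed a first eigenfunction of the boundary torus $\partial T$ into Proposition \ref{tube} and to read off the bound from the variational characterization of $\lambda_1$.

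First I would record the matching upper bound to the estimate already used in the proof of Proposition \ref{tube}: since $\partial T$ is a flat torus whose injectivity radius is comparable to $\epsilon$, its first nonzero eigenvalue satisfies $\lambda_1(\partial T)\leq c_0/{\rm vol}(\partial T)^2$ for a universal constant $c_0>0$. (Writing $\partial T=\mathbb{R}^2/\Lambda$, the eigenvalues of $\partial T$ are $4\pi^2\Vert v\Vert^2$ with $v$ in the dual lattice, and the injectivity radius bound forces the shortest dual vector to have length comparable to $\epsilon/{\rm vol}(\partial T)$; alternatively, one simply tests with a sine wave in the long direction of $\partial T$.)

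Next, let $f:\partial T\to\mathbb{R}$ be a first nonzero eigenfunction of $\partial T$, so that $\int_{\partial T}f=0$ and the Rayleigh quotient of $f$ equals $d=\lambda_1(\partial T)$. Applying Proposition \ref{tube} to $f$ produces a smooth extension $F$ of $f$ to $T$ with $\int_TF^2\geq\frac{1}{4}\int_{\partial T}f^2>0$ (part (1)), with $\int_TF=0$ (part (3), since $\int_{\partial T}f=0$), and with Rayleigh quotient of $F$ at most $q_2\log{\rm vol}(T)\cdot\lambda_1(\partial T)$ (part (2)). Thus $F$ is a non-trivial function in the Sobolev space of $T$ which is $L^2$-orthogonal to the constants, hence admissible for the variational (Rayleigh) characterization of the first nonzero Neumann eigenvalue of $T$, and therefore
\[\lambda_1(T)\leq\frac{\int_T\Vert\nabla F\Vert^2}{\int_TF^2}\leq q_2\log{\rm vol}(T)\cdot\lambda_1(\partial T)\leq\frac{q_2c_0\log{\rm vol}(T)}{{\rm vol}(\partial T)^2},\]
so that $q_3=q_2c_0$ works; the cusp case is identical, using the cusp version of Proposition \ref{tube}.

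The one point that needs a word of care — and the only, rather mild, obstacle — is the cusp case: there $T$ is non-compact, so before invoking the variational characterization one observes that the bottom of the essential spectrum of $T$ with Neumann conditions is at least $(n-1)^2/4=1$ (cf. Corollary 3.2 of \cite{H04}), which lies far above the value produced above. Hence the first nonzero Neumann eigenvalue of $T$ genuinely exists and equals the infimum of the Rayleigh quotient over mean-zero functions, so the inequality $\lambda_1(T)\leq\int_T\Vert\nabla F\Vert^2/\int_TF^2$ is legitimate. Everything else is a direct substitution into Proposition \ref{tube}.
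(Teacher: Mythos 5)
Your proposal is correct and follows essentially the same route as the paper: take a mean-zero function on the flat boundary torus with Rayleigh quotient $\lesssim 1/{\rm vol}(\partial T)^2$ (the paper cites Chavel together with the injectivity radius bound, where you argue directly via the dual lattice), extend it into $T$ by Proposition \ref{tube}, and conclude by Rayleigh's principle for the Neumann problem. Your extra remark on the essential spectrum in the cusp case is a reasonable refinement of a point the paper leaves implicit.
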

\begin{proof}
By the discussion in 
Chapter II, Section 2 of \cite{C84} 
and the fact that the injectivity radius of the flat torus
$\partial T$ is proportional to $\epsilon$,  
there exists a universal number $a>0$ and 
a function $f:\partial T\to \mathbb{R}$ with $\int_{\partial T}f=0$ and 
\[\int_{\partial T}\Vert \nabla f\Vert^2\leq a\int_{\partial T}f^2 /{\rm vol}(\partial T)^2.\]

By Proposition \ref{tube}, the function $f$ can be extended to a function
$F:T\to \mathbb{R}$ with $\int_T F=0$ and Rayleigh quotient
${\cal R}(f)\leq q_2a\log {\rm vol}(T)/{\rm vol}(\partial T)^2$. 
Now $F$ is smooth up to and including the boundary, and therefore 
by Rayleigh's principle as 
explained in Chapter 1 Section 5 of \cite{C84},
this implies 
that the first non-zero eigenvalue of $T$ with Neumann boundary conditions 
is bounded from above by $q_2a\log{\rm vol}(T)/{\rm vol}(\partial T)^2$ as
claimed in the corollary. 
\end{proof}


The strategy for the proof of 
Theorem \ref{thm2} from the introduction consists in extending
an eigenfunction $f$ on $\hat M_{\rm thick}$ to a function on $M$
using the construction in Proposition \ref{tube}. But the
control on square integrals and Rayleigh quotients
established in Proposition \ref{tube} depends on a control of
these data for the restriction of $f$ to the boundary of
$\hat M_{\rm thick}$, and there is no apparent relation to the
global square norm and the global Rayleigh quotient of $f$.
Note also that there is no useful Harnack inequality for the restriction of 
an eigenfunction on $\hat M_{\rm thick}$ to its boundary. 

To overcome this difficulty we establish some a-priori control
on the restriction of eigenfunctions for small eigenvalues to sufficiently large
tubular neighborhoods of the boundary of $\hat M_{\rm thick}$- the price
we have to pay is that we have to decrease the Margulis constant
which appears in Theorem \ref{thm2}. 

We begin with establishing 
a modified version of Lemma \ref{partialintegration}.
We only formulate this lemma for hyperbolic 3-manifolds although it 
is valid for arbitrary finite volume manifolds of curvature contained in 
$[-b^2,-1]$, where integration over radial distance hypersurfaces has to be
interpreted as in Section \ref{below}. 

Define a \emph{shell}
in a Margulis tube or cusp $T$ to be a subset of $T$ of the form
$\{s\leq \rho\leq t\}$ where $\rho$ is up to an additive  
constant the \emph{negative} of the
radial distance from the boundary of $T$
and $s<t$ (using the negative of the radial distance is for convenience here). 
The \emph{height} of the shell equals $t-s$. 
We always write a shell $N$ in the form $N=V\times [0,k]$ where
$k$ is the height of the shell, the manifold
$V$ is diffeomorphic to the boundary 
$\partial T$ of the tube or cusp, 
the real parameter $t$ is the radial distance
from the boundary component $V\times \{0\}$, and 
the boundary component $V\times \{k\}$ is closer to the boundary $\partial T$ of the tube or cusp. 
As before, we denote by ${\cal R}(f)$ the Rayleigh quotient of a function $f$.

\begin{lemma}\label{volumecontrol}
\begin{enumerate}
\item Let $k\geq 2$ and let 
$N=V\times [0,k]$ be a shell of height $k$ in 
a Margulis tube or cusp in a hyperbolic 3-manifold. 
Let $f:N\to \mathbb{R}$ be a function which is smooth up to
and including the boundary. 
If $\int_{V\times [0,1]}f^2\geq 3 \int_{V\times [k-1,k]}f^2$
then ${\cal R}(f)\geq \frac{1}{3}$.
\item Let $f$ be an eigenfunction on $\hat M_{\rm thick}$ with Neumann boundary
  conditions for an eigenvalue $\lambda<1$. Let $\tau>0$ be sufficiently
  small that
the closed tubular neighborhood of radius $\tau$ of 
the boundary $\partial \hat M_{\rm thick}$ in $\hat M_{\rm thick}$ is a
shell, diffeomorphic to $\partial \hat M_{\rm thick}\times [0,\tau]$.  
Then
$\int_{\partial \hat M_{\rm thick}\times \{\tau\}}f^2\geq 
\int_{\partial \hat M_{\rm thick}\times  \{0\}}f^2$. 
\end{enumerate}
\end{lemma}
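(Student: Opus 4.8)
For part (1), I would set up coordinates on the shell $N=V\times[0,k]$ exactly as in Lemma \ref{partialintegration}: write $\rho\in[0,k]$ for the parameter along the shell, let $j(\cdot,\rho)$ be the Jacobian of the radial exponential map, and use the warped-product description of the metric. The key geometric input is the comparison from Lemma \ref{volumegrowth} (and its cusp analog), which in dimension $3$ gives $j'(\rho)\ge \bar j'(\rho)$ with $\bar j(\rho)=\sinh\rho\cosh\rho$ for tubes (resp.\ $e^{-2\rho}$ for cusps), so that $j$ is "at least as convex" as in the hyperbolic model. From this one gets, via integration by parts along the radial rays, an inequality of the shape
\[
\int_N f^2 \;\le\; C\Bigl(\int_{V\times\{0\}} f^2 + \int_{V\times\{k\}} f^2\Bigr) \;+\; C'\int_N |f\,f'|,
\]
where $f'$ is the radial derivative. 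Then the hypothesis $\int_{V\times[0,1]}f^2\ge 3\int_{V\times[k-1,k]}f^2$ is exactly what lets me absorb the boundary terms: averaging the defining differential inequality over the unit-width sub-shells $V\times[0,1]$ and $V\times[k-1,k]$ converts boundary integrals into the integrals $\int_{V\times[0,1]}f^2$ and $\int_{V\times[k-1,k]}f^2$, and the factor $3$ together with the coefficient $n-1=2$ is calibrated so that after applying Cauchy--Schwarz ($\int_N|f f'|\le (\int_N f^2)^{1/2}(\int_N\|\nabla f\|^2)^{1/2}$ and $|f'|\le\|\nabla f\|$) and rearranging, one lands on ${\cal R}(f)=\int_N\|\nabla f\|^2/\int_N f^2\ge 1/3$. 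This is the same division-and-square maneuver as in Lemma \ref{partialintegration}; the only new point is bookkeeping with the two-sided boundary of a shell rather than a tube.

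For part (2), I would argue by contradiction: suppose $f$ is a Neumann eigenfunction on $\hat M_{\rm thick}$ for an eigenvalue $\lambda<1$ but $\int_{\partial\hat M_{\rm thick}\times\{\tau\}}f^2 < \int_{\partial\hat M_{\rm thick}\times\{0\}}f^2$. Since $\partial\hat M_{\rm thick}\times\{0\}=\partial\hat M_{\rm thick}$, on each boundary component this says the square-integral of $f$ on the inner slice (distance $\tau$ into $\hat M_{\rm thick}$) is strictly smaller than on the boundary slice. I would then extend $f$ \emph{across} the boundary into the adjacent shell $V\times[0,1]\subset T-\hat T$ of the corresponding Margulis tube or cusp — this is exactly the region $A\subset \hat M_{\rm thick}$... wait, rather it is the region $T-\hat T$ on the thin side — by radial reflection or by the warped-product extension of Proposition \ref{tube}. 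Concatenating the tubular collar of radius $\tau$ inside $\hat M_{\rm thick}$ with a unit-height shell on the thin side produces a shell to which part (1) applies, with the roles of the two ends arranged so that the hypothesis $\int_{V\times[0,1]}f^2\ge 3\int_{V\times[k-1,k]}f^2$ is met (after choosing the reflected extension to control the thin-side contribution in terms of $\int_{\partial\hat M_{\rm thick}}f^2$). Part (1) then forces the Rayleigh quotient of the extended function on that shell to be $\ge 1/3$; but $f$ restricted to the $\hat M_{\rm thick}$-side of the shell has Rayleigh quotient $\le\lambda<1$, and the reflected extension contributes a comparable, controlled amount of gradient energy, so one can keep the global Rayleigh quotient below $1/3$ — contradiction.

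The main obstacle is the matching in part (2): one has to extend the eigenfunction past $\partial\hat M_{\rm thick}$ in a way that (a) does not blow up the Rayleigh quotient beyond a controlled multiple of $\lambda$, and (b) arranges the "inner end heavier than outer end" hypothesis of part (1) with the correct constant $3$. The cleanest route is probably a straight radial reflection of $f$ across $\partial\hat M_{\rm thick}$ over a unit-width collar (so the extended function is Lipschitz, its square-integral on the reflected piece equals that on the original collar, and its gradient energy is likewise doubled on that collar), combined with a careful choice of which end of the resulting shell plays the role of $V\times\{0\}$. Getting the constants to line up — in particular reconciling the factor $3$ in part (1), the eigenvalue threshold $\lambda<1$, and the unavoidable factor-$2$ loss from reflection — is the delicate part; everything else is the by-now-standard Riccati/integration-by-parts comparison already used twice in Section \ref{below}.
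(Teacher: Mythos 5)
Concerning part (1): your plan is in the same spirit as the paper's proof (the dimension--three fact that the area element of the slices grows along the outward radial direction at logarithmic rate $c(t)\geq 2$, a Cauchy--Schwarz step, and a mean--value choice of levels inside the two unit collars to replace boundary slices by collar integrals). The paper implements this by differentiating the slice function $a(t)=\int_{V\times\{t\}}f^2$, which satisfies $a^\prime(t)=c(t)a(t)+2\int_{V\times\{t\}}f\nu(f)$ with $c(t)\geq 2$, choosing $s\in[0,1]$ with $a(s)\geq \frac{1}{3}\int_{V\times[0,1]}f^2$ and $u\in[k-1,k]$ with $a(u)\leq \int_{V\times[k-1,k]}f^2$, so that $a(u)\leq a(s)$ and $\int_{V\times[s,u]}f^2\geq \frac{1}{3}\int_N f^2$, and then integrating the identity from $s$ to $u$ to get $\int_{V\times[s,u]}f^2\leq \int_N\Vert\nabla f\Vert^2$. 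You never check that your ``calibration'' actually returns the constant $\frac13$ (a direct imitation of Lemma \ref{partialintegration} with the outer boundary slice replaced by a mean--value level gives a slightly worse constant), but this part is incomplete rather than wrong.

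Part (2) of your proposal has a genuine gap. First, you assert that the restriction of the Neumann eigenfunction to the thick side of your concatenated shell has Rayleigh quotient at most $\lambda$; this is unjustified and false in general --- restrictions of eigenfunctions to subdomains do not have controlled Rayleigh quotients, and overcoming exactly this is the point of the paper's argument. Second, the hypothesis you negate only yields a strict inequality between two slice integrals a distance $\tau$ apart, with $\tau$ possibly arbitrarily small; it provides neither the height $k\geq 2$ nor the factor--$3$ discrepancy between unit collars that part (1) requires, and even if it did, a contradiction would force the extended function's Rayleigh quotient on that shell below $\frac13$, whereas you only know $\lambda<1$ (and your reflection/extension loses further constants). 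The paper needs no extension across $\partial\hat M_{\rm thick}$ at all: assuming the deficit $\delta>0$, one takes the smallest $t_0\in(0,\tau]$ realizing it, so that $\frac{d}{dt}\int_{\partial\hat M_{\rm thick}\times\{t\}}f^2$ is nonpositive at $t_0$, whence by the derivative identity the flux $\int_{\partial\hat M_{\rm thick}\times\{t_0\}}f\nu(f)$ is negative; Green's formula on the collar $\partial\hat M_{\rm thick}\times[0,t_0]$, with the Neumann condition killing the boundary term at $\{0\}$, then gives $\int\Vert\nabla f\Vert^2\leq\lambda\int f^2$ on the collar, while the part--(1) computation applied with $s=0$, $u=t_0$ (which only needs $a(t_0)\leq a(0)$, i.e.\ the negated hypothesis) gives $\int f^2\leq\int\Vert\nabla f\Vert^2$ there, a contradiction to $\lambda<1$. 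This use of the Neumann boundary condition via Green's formula, together with the sign of the boundary flux at a well--chosen level, is the idea missing from your proposal, and without it your contradiction cannot be closed.
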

\begin{proof} We begin with the proof of the first part of the lemma. Define 
  \begin{align}
    s&=\min\{t\in  [0,1]\mid \int_{V\times \{t\}}f^2
\geq \frac{1}{3}\int_{V\times [0,1]}f^2\}\text{ and }\notag\\
    u&=\max\{t\in [k-1,k]\mid
    \int_{V\times \{t\}}f^2\leq \int_{V\times [k-1,k]}f^2\}.\notag
    \end{align}
Then 
\begin{equation}
\int_{V\times [0,s]}f^2\leq \frac{1}{3}\int_{V\times [0,1]}f^2\leq 
\frac{1}{3}\int_N f^2,
\notag\end{equation}
in particular, by the assumption on $f$ in the first part
of the lemma, we have
\begin{align}
  \int_{V\times [s,u]}f^2& \geq \int_Nf^2-
\frac{1}{3}\int_{V\times [0,1]}f^2
  -\int_{V\times [k-1,k]}f^2\notag\\
&  \geq \int_Nf^2-
  \frac{2}{3}\int_{V\times [0,1]}f^2\geq \frac{1}{3}\int_Nf^2.
\notag\end{align}

Let us compute the derivative of the function
$a(t)=\int_{V\times \{t\}}f^2$. Recall that the radial projection 
$(x,t)\in V\times \{t\}\to (x,u)\in V\times \{u\}$ 
is a homothety which
scales the Lebesgue measure of the flat torus $\rho=t$ by 
a factor $b(t,u)$, with $\frac{d}{dt}b(t,u)\vert_{u=t}=c(t)\geq 2$
(in fact, for a cusp we have $c(t)\equiv 2$, and in the
case of a Margulis tube, if the radial distance
of $V\times \{t\}$ from the core geodesic equals $R$, then
$c(t)=(\cosh^2(R)+\sinh^2(R))/\sinh(R)\cosh(R)\geq 2$, compare the proof of Lemma
\ref{partialintegration}).
We obtain
\begin{equation}\label{areaderi}
  \frac{d}{dt}\int_{V\times \{t\}}f^2=c(t)\int_{V\times \{t\}}f^2
+2\int_{V\times \{t\}}f\nu(f)\end{equation}
where $\nu$ is the outer normal field of the hypersurface $V\times \{t\}$. 

Thus as $\int_{V\times \{u\}}f^2\leq \int_{V\times [k-1,k]}f^2
\leq \int_{V\times \{s\}}f^2$, we conclude that 
\begin{equation}\label{integral4}
  \int_{V\times \{u\}}f^2-\int_{V\times \{s\}}f^2=
\int_{V\times [s,u]} c(t)f^2+2f\nu(f)\leq 0\end{equation}
and hence 
\[\int_{V\times [s,u] }c(t)f^2\leq 2\vert \int_{V\times [s,u]} f\nu(f)\vert.\]
Using $c(t)\geq 2$ for all $t$ and $\nu(f)^2\leq \Vert \nabla f\Vert^2$ and
applying the Schwarz inequality as before,
we deduce
\begin{equation}\label{integral3} 
\int_{V\times [s,u]} f^2\leq \int_{V\times [s,u]}\Vert \nabla f\Vert^2\leq \int_N\Vert \nabla f\Vert^2.
\end{equation}
As
$\int_{V\times [s,u]}f^2\geq \frac{1}{3}\int_Nf^2$,
this provides the first statement in the lemma. 

To show the second statement in the lemma,
let $f$ be an eigenfunction on $\hat M_{\rm thick}$
with Neumann boundary conditions and eigenvalue $\lambda <1$.
Assume to the contrary that there exists a number 
$\tau>0$ such that
the tubular neighborhood of radius $\tau$ about the boundary
$\partial \hat M_{\rm thick}$ of $\hat M_{\rm thick}$ is a shell
diffeomorphic to $\partial \hat M_{\rm thick}\times [0,\tau]$
and that 
\begin{equation}\label{delta1}
\delta=\int_{\partial \hat M_{\rm thick}\times \{0\}}f^2-
\int_{\partial \hat M_{\rm thick}\times \{\tau\}}f^2>0.\end{equation}
Let $t_0\in (0,\tau]$ be the smallest number for which 
the equation (\ref{delta1}) holds true, with this number $\delta$. We then have
\[\frac{d}{dt}\int_{\partial \hat M_{\rm thick}\times \{t\}}f^2\vert_{t=t_0}\leq 0.\]
By formula (\ref{areaderi}), this
implies that $\int_{\partial \hat M_{\rm thick}\times \{t_0\}}
f\nu(f)<0.$

On the other hand, as $f$ is an eigenfunction for the
eigenvalue $\lambda$, with Neumann boundary conditions,
Green's formula yields that 
\begin{align}
\int_{\partial \hat M_{\rm thick}\times [0,t_0]}f\Delta f=
-\lambda \int_{\partial \hat M_{\rm thick}\times [0,t_0]}f^2\notag \\=
\int_{\partial \hat M_{\rm thick}\times \{t_0\}}f\nu(f)-
\int_{\partial \hat M_{\rm thick}\times [0,t_0]}\Vert \nabla f\Vert^2.\notag
\end{align}
Thus $\int_{\partial \hat M_{\rm thick}\times [0,t_0]}\Vert \nabla f\Vert^2\leq 
\lambda \int_{\partial \hat M_{\rm thick}\times [0,t_0]}f^2$.
As $\lambda <1$ by assumption, this violates the 
first inequality in the formula (\ref{integral3}). Note that this
inequality applies since by our setup, the estimate in
inequality (\ref{integral4}) holds true for $u=t_0$ and $s=0$. 
\end{proof}

\begin{corollary}\label{volume2}
Let $M$ be a hyperbolic 3-manifold and let $\hat M_{\rm thick}$ be the
tubular neighborhood of radius $96$ about $M_{\rm thick}$. 
Let $N$ be the tubular neighborhood of radius one about $\partial \hat M_{\rm thick}$
in $\hat M_{\rm thick}$. 
Let $f:\hat M_{\rm thick}\to \mathbb{R}$ be a smooth function with 
${\cal R}(f)\leq 1/96$; then
\[\int_Nf^2\leq \frac{1}{32}\int_{\hat M_{\rm thick}}f^2.\]
\end{corollary}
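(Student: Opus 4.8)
The plan is to localize the estimate to the collars of $\partial\hat M_{\rm thick}$ and to run, on each of them, a one–variable differential inequality of exactly the type appearing in the proof of Lemma \ref{volumecontrol}. First I would fix the geometry: as in the adjustment of the Margulis constant carried out in Section \ref{below} — now, via inequality (\ref{mincontrol}), choosing it small enough that every Margulis tube of $M_{\rm thin}$ has radius at least $97$ — the tubular neighborhood $\hat M_{\rm thick}$ of radius $96$ of $M_{\rm thick}$ is a smooth submanifold of $M$, and for each component $T_i$ of $M_{\rm thin}$ the collar $C_i=T_i\cap\hat M_{\rm thick}$ is a shell which, in the conventions of Lemma \ref{volumecontrol}, we write as $C_i=V_i\times[0,96]$ with $t$ the radial distance from the component $V_i\times\{0\}$ of $\partial\hat M_{\rm thick}$ and $V_i\times\{96\}\subset\partial M_{\rm thick}$. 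Then $N=\bigcup_iN_i$ with $N_i=V_i\times[0,1]$, the $C_i$ are pairwise disjoint subsets of $\hat M_{\rm thick}$, and $\hat M_{\rm thick}=M_{\rm thick}\cup\bigcup_iC_i$. For a smooth $f$ with ${\cal R}(f)\le 1/96$ I write, as in formula (\ref{areaderi}), $\phi_i(t)=\int_{V_i\times\{t\}}f^2$ and $g_i(t)=\int_{V_i\times\{t\}}\Vert\nabla f\Vert^2$ for the slice integrals, so that $\int_0^{96}\phi_i=\int_{C_i}f^2$, $\int_0^{96}g_i=\int_{C_i}\Vert\nabla f\Vert^2=:G_i$, and $\int_{N_i}f^2=\int_0^1\phi_i$.

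Next I would run the differential inequality. Formula (\ref{areaderi}), together with $c(t)\ge 2$, the Cauchy–Schwarz inequality $\vert\int_{V_i\times\{t\}}f\,\nu(f)\vert\le\phi_i(t)^{1/2}g_i(t)^{1/2}$ (using $\nu(f)^2\le\Vert\nabla f\Vert^2$), and $2\sqrt{xy}\le x+y$, gives $\phi_i'(t)\ge\phi_i(t)-g_i(t)$, hence $\frac{d}{dt}\bigl(e^{-t}\phi_i(t)\bigr)\ge -e^{-t}g_i(t)$. For $0\le t\le s\le 96$ this yields $e^{-s}\phi_i(s)-e^{-t}\phi_i(t)\ge -G_i$, i.e. $\phi_i(s)\ge e^{s-t}\phi_i(t)-e^sG_i$; integrating in $s$ over $[t,96]$ and using $\int_t^{96}\phi_i\le\int_{C_i}f^2$ gives $\phi_i(t)(e^{96-t}-1)\le\int_{C_i}f^2+e^{96}G_i$, and then integrating in $t$ over $[0,1]$, where $e^{96-t}-1\ge e^{95}-1$, produces
\[
\int_{N_i}f^2 \ \le\ \frac{1}{e^{95}-1}\int_{C_i}f^2 \ +\ \frac{e^{96}}{e^{95}-1}\,G_i .
\]
Summing over $i$, using disjointness of the $C_i$, and then inserting $\int_{\hat M_{\rm thick}}\Vert\nabla f\Vert^2={\cal R}(f)\int_{\hat M_{\rm thick}}f^2\le\frac1{96}\int_{\hat M_{\rm thick}}f^2$, gives $\int_Nf^2\le\bigl(\frac1{e^{95}-1}+\frac{e^{96}}{96(e^{95}-1)}\bigr)\int_{\hat M_{\rm thick}}f^2$; since $e^{95}-1$ is enormous and $e<3$, the coefficient in parentheses is smaller than $\frac1{32}$, which is the assertion. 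The case of a cusp is handled identically, with $c(t)\equiv 2$ and no lower bound on a radius needed.

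The step that requires care is this balancing of constants. A crude estimate only controls $\int_{N_i}f^2$ by $G_i$ together with a term of order $\tfrac1{95}\int_{C_i}f^2$, which is too large once summed over $i$; the gain comes from exploiting the exponential growth of the cross–sectional area along the whole length–$96$ collar, which pushes the coefficient of $\int_{C_i}f^2$ down to the negligible $1/(e^{95}-1)$ and leaves, in effect, $\int_Nf^2\lesssim e\int_{\hat M_{\rm thick}}\Vert\nabla f\Vert^2\le\tfrac e{96}\int_{\hat M_{\rm thick}}f^2$. That $\tfrac e{96}<\tfrac1{32}$ is precisely the inequality $e<3$, which is why the radius $96=3\cdot 32$ and the threshold $1/96$ in Theorem \ref{thm2} are chosen as they are.
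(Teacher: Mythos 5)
Your proof is correct, but it reaches the estimate by a genuinely different route than the paper. The paper deduces the corollary from part (1) of Lemma \ref{volumecontrol} by a dichotomy over the $96$ unit sub-shells of the collar $V\times[0,96]$: either some $k\le 95$ satisfies $\int_{V\times[0,1]}f^2\ge 3\int_{V\times[k,k+1]}f^2$, in which case the lemma gives ${\cal R}(f\vert_{V\times[0,k+1]})\ge 1/3$ and the hypothesis ${\cal R}(f)\le 1/96$ forces $\int_{V\times[0,1]}f^2\le\frac{3}{96}\int_{\hat M_{\rm thick}}f^2$; or else $\int_{V\times[0,1]}f^2\le 3\int_{V\times[k,k+1]}f^2$ for every $k$, and summing over $k=0,\dots,95$ gives the same bound --- this dichotomy is exactly where the numerology $96=3\cdot 32$ comes from. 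You instead bypass Lemma \ref{volumecontrol}(1) and run a Gronwall-type argument directly on formula (\ref{areaderi}): from $c(t)\ge 2$ and Cauchy--Schwarz you get $\phi_i'\ge \phi_i-g_i$ for your slice integrals, and integrating along the whole length-$96$ collar yields the pointwise bound $\phi_i(t)(e^{96-t}-1)\le\int_{C_i}f^2+e^{96}G_i$ and hence, after summing over components, a coefficient of roughly $e/96$ plus exponentially small corrections --- quantitatively sharper than $1/32$, with no case distinction. Both arguments rest on the same geometric input (exponential growth of the slice area encoded in $c(t)\ge 2$ plus Cauchy--Schwarz), so yours is best viewed as a continuous, quantitative version of the paper's discrete one; what the paper's version buys is modularity (Lemma \ref{volumecontrol} is needed anyway, for its part (2) and again in Proposition \ref{eigenvalue}) and clean constants, while yours buys a stronger constant and a slicewise estimate. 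Two points you handled correctly and which are equally implicit in the paper's own proof: the collar $T_i\cap\hat M_{\rm thick}$ must be a product shell, which requires absorbing into $M_{\rm thick}$ the Margulis tubes of radius below roughly $97$ (the ``suitable choice of a Margulis constant'' in Proposition \ref{eigenvalue}); and in dimension $3$ the boundary of a tube is an exact distance torus, so the equidistant foliation and formula (\ref{areaderi}) apply exactly as you use them.
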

\begin{proof}
Parameterize $\hat M_{\rm thick}-M_{\rm thick}=W$ as $W=V\times [0,96]$
where $\partial \hat M_{\rm thick}=V\times \{0\}$. With this notation, we have
$N=V\times [0,1]$. 

We distinguish two cases. In the first case, there exists some $k\in [0,95]$
so that $\int_{V\times [0,1]}f^2\geq 3\int_{V\times [k,k+1]}f^2$. By the first
part of Lemma \ref{volumecontrol}, we have
${\cal R}(f\vert_{V\times [0,k+1]})\geq 1/3$. This implies that
\[\int_{\hat M_{\rm thick}}\Vert \nabla f\Vert^2
\geq \int_{V\times [0,k+1]} \Vert \nabla f\Vert^2\geq 
\frac{1}{3}
\int_{V\times [0,k+1]}f^2.\]

But ${\cal R}(f)\leq 1/96$, that is,
\[\int_{\hat M_{\rm thick}}\Vert \nabla f\Vert^2\leq \frac{1}{96}\int_{\hat M_{\rm thick}}f^2.\]
These two estimates together yield
\[\int_{V\times [0,1]}f^2\leq \int_{V\times [0,k+1]}f^2\leq \frac{1}{32}\int_{\hat M_{\rm thick}}f^2\]
as claimed.

In the second case, we have $\int_{V\times [0,1]}f^2\leq 3\int_{V\times [k,k+1]}f^2$ for all 
$k\leq 95$. But this implies as before that 
\[\int_{V\times [0,1]}f^2\leq \frac{1}{32}
\int_{V\times [0,96]}f^2\leq \frac{1}{32}\int_{\hat M_{\rm thick}}f^2\]
which is what we wanted to show.
\end{proof}

As a consequence, we obtain Theorem \ref{thm2} from the introduction.

\begin{proposition}\label{eigenvalue}
There is a constant $q_4>0$ with the following property.
Let $M$ be a finite volume oriented hyperbolic $3$-manifold; then for a suitable
choice of a Margulis constant, we have 
\[\lambda_k(M)\leq 
(3+q_4(\log {\rm vol}(M_{\rm thin}))\lambda_k(M_{\rm thick})\] 
for all $k$ so that $\lambda_k(M_{\rm thick})<1/96$. 
\end{proposition}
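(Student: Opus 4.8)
The plan is to estimate $\lambda_k(M)$ from above through Rayleigh's principle, using as test functions the extensions to $M$, produced by Proposition \ref{tube}, of low Neumann eigenfunctions of $\hat M_{\rm thick}$, where throughout $\hat M_{\rm thick}$ denotes the tubular neighborhood of radius $96$ about the thick part and $M\setminus\hat M_{\rm thick}=\bigsqcup_i\hat T_i$. If the Margulis constant is small enough that every Margulis tube in the original thin part has radius at least $99$, then each component $\hat T_i$ is a Margulis tube or cusp of radius at least three, so Proposition \ref{tube} and Corollary \ref{volume2} apply to it; moreover $\hat M_{\rm thick}$ is exactly the thick part and $\bigcup_i\hat T_i$ exactly the thin part for this constant, and ${\rm vol}(\bigcup_i\hat T_i)\le{\rm vol}(M_{\rm thin})$. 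It therefore suffices to prove the asserted inequality with $\hat M_{\rm thick}$ and $\bigcup_i\hat T_i$ in place of $M_{\rm thick}$ and $M_{\rm thin}$.

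Fix an $L^2$--orthonormal family $f_0,\dots,f_k$ of Neumann eigenfunctions of $\hat M_{\rm thick}$ for the eigenvalues $\lambda_0\le\dots\le\lambda_k=\lambda_k(\hat M_{\rm thick})<1/96$; these are smooth up to the boundary. Let $E$ be their linear span. For $f\in E$ and each $i$, Proposition \ref{tube} applied to the restriction $f|_{\partial\hat T_i}$ yields a smooth extension $F_i$ of $f|_{\partial\hat T_i}$ to $\hat T_i$, depending linearly on $f$, with $\int_{\hat T_i}F_i^2\le\tfrac12\int_{\partial\hat T_i}f^2$ and ${\cal R}(F_i)\le q_2\log{\rm vol}(\hat T_i)\,{\cal R}(f|_{\partial\hat T_i})$. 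Gluing, $\tilde f:=f$ on $\hat M_{\rm thick}$ and $\tilde f:=F_i$ on $\hat T_i$ defines a continuous function in ${\cal H}(M)$, the map $f\mapsto\tilde f$ is linear (the construction in Proposition \ref{tube} is linear in the boundary datum) and injective (restriction to $\hat M_{\rm thick}$ recovers $f$), so its image $W$ is a $(k+1)$--dimensional subspace of ${\cal H}(M)$.

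Now I would estimate ${\cal R}(\tilde f)$ for $f=\sum_j a_j f_j\in E$. Since the tube terms in $\int_M\tilde f^2$ are nonnegative, $\int_M\tilde f^2\ge\int_{\hat M_{\rm thick}}f^2$. For the energy, $\int_{\hat M_{\rm thick}}\Vert\nabla f\Vert^2=\sum_j a_j^2\lambda_j\le\lambda_k\int_{\hat M_{\rm thick}}f^2$ by orthonormality, while Proposition \ref{tube} gives $\int_{\hat T_i}\Vert\nabla\tilde f\Vert^2={\cal R}(F_i)\int_{\hat T_i}F_i^2\le\tfrac12 q_2\log{\rm vol}(\hat T_i)\int_{\partial\hat T_i}\Vert\nabla^{\partial}f\Vert^2$, where $\nabla^{\partial}f$ is the gradient of $f|_{\partial\hat T_i}$ in the induced metric, and $\nabla^{\partial}f=\nabla f$ on $\partial\hat M_{\rm thick}$ by the Neumann condition. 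Summing over $i$ and using that each $\hat T_i$ has volume bounded below, so that $\max_i\log{\rm vol}(\hat T_i)\le\log{\rm vol}(\bigcup_i\hat T_i)+O(1)$, we get
\[\int_M\Vert\nabla\tilde f\Vert^2\le\Bigl(\lambda_k+\tfrac12 q_2\bigl(\log{\rm vol}(\bigcup_i\hat T_i)+O(1)\bigr)\,S\Bigr)\int_{\hat M_{\rm thick}}f^2,\qquad S:=\sup_{0\ne g\in E}\frac{\int_{\partial\hat M_{\rm thick}}\Vert\nabla g\Vert^2}{\int_{\hat M_{\rm thick}}g^2}.\]
Thus everything reduces to the a priori bound $S\le C\lambda_k(\hat M_{\rm thick})$ with a universal $C$: then ${\cal R}(\tilde f)\le(1+O(1)+\tfrac12 q_2 C\log{\rm vol}(\bigcup_i\hat T_i))\lambda_k(\hat M_{\rm thick})\le(3+q_4\log{\rm vol}(\bigcup_i\hat T_i))\lambda_k(\hat M_{\rm thick})$ for a suitable universal $q_4$, and Rayleigh's principle gives the proposition (the passage to $\log{\rm vol}(M_{\rm thin})$ is harmless since $\log{\rm vol}(\bigcup_i\hat T_i)\le\log{\rm vol}(M_{\rm thin})$).

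The a priori bound $S\le C\lambda_k(\hat M_{\rm thick})$ is the main obstacle: it controls a boundary integral of $\Vert\nabla f\Vert^2$ by a bulk integral, and a crude trace inequality does not suffice, because the slowest nonzero Fourier mode on the long thin torus $\partial\hat T_i$ is too slowly growing across a collar to be controlled by $\int_{\hat M_{\rm thick}}f^2$. I would prove it with a Reilly (Bochner) argument. Write $f=\sum_j a_j f_j$; then $\int_{\hat M_{\rm thick}}(\Delta f)^2=\sum_j a_j^2\lambda_j^2\le\lambda_k\int_{\hat M_{\rm thick}}\Vert\nabla f\Vert^2\le\lambda_k^2\int_{\hat M_{\rm thick}}f^2$. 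Applying the divergence theorem on the radius--one collar of $\partial\hat M_{\rm thick}$ to $\chi(\rho)\Vert\nabla f\Vert^2E_\rho$, with $E_\rho$ the inward radial unit field and $\chi$ a cut-off with $\chi(0)=1$, $\chi(1)=0$, and using that the radial level surfaces at distance at most one from $\partial\hat M_{\rm thick}$ have uniformly bounded mean curvature (Theorem \ref{thmbcd93} and the discussion preceding Lemma \ref{radiusbound}), gives
\[\int_{\partial\hat M_{\rm thick}}\Vert\nabla f\Vert^2\le c_0\int_{\hat M_{\rm thick}}\Vert\nabla f\Vert^2+2\Bigl(\int_{\hat M_{\rm thick}}\Vert\mathrm{Hess}\,f\Vert^2\Bigr)^{1/2}\Bigl(\int_{\hat M_{\rm thick}}\Vert\nabla f\Vert^2\Bigr)^{1/2}.\]
The Reilly identity with the Neumann condition, $\mathrm{Ric}=-2g$, and $|\mathrm{II}|\le c_0$ for $\partial\hat M_{\rm thick}$ gives $\int_{\hat M_{\rm thick}}\Vert\mathrm{Hess}\,f\Vert^2\le\int_{\hat M_{\rm thick}}(\Delta f)^2+2\int_{\hat M_{\rm thick}}\Vert\nabla f\Vert^2+c_0\int_{\partial\hat M_{\rm thick}}\Vert\nabla f\Vert^2$; substituting, using $\int_{\hat M_{\rm thick}}\Vert\nabla f\Vert^2\le\lambda_k\int_{\hat M_{\rm thick}}f^2$ and $\lambda_k<1$, and absorbing the resulting boundary term on the right by Cauchy--Schwarz yields $S\le C\lambda_k(\hat M_{\rm thick})$ with $C$ universal, since $\lambda_k<1/96$. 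A more hands-on alternative argues directly on the wide (radius $96$) collar via the eigenvalue equation together with Corollary \ref{volume2} and Lemma \ref{volumecontrol}; these are precisely the tools prepared for this step, and the need for the wide collar is the reason the Margulis constant must be shrunk.
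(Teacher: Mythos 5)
Your proposal is correct in outline, but it takes a genuinely different route at the decisive step. The paper never proves a boundary trace estimate for eigenfunctions: it chooses $m$ with $\int_N\Vert\nabla f\Vert^2=m\lambda_k\int_Nf^2$, finds by a mean value argument a level $\delta\in[0,1]$ in the radius-one collar where the slice Rayleigh quotient is at most $m\lambda_k$, cuts there and extends by Proposition \ref{tube}, the factor $m$ cancelling against $\int_Nf^2\le\frac{1}{m}\int_{\hat M_{\rm thick}}f^2$; since this only controls single eigenfunctions, it then needs Corollary \ref{volume2} and Lemma \ref{volumecontrol} both to bound the $L^2$ mass of the collar and to run the almost-orthogonality/projection argument for linear combinations, plus the final $e^{\beta(s)}$ modification securing (\ref{integralestimate}). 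You instead establish the uniform bound $S=\sup_{0\ne g\in E}\int_{\partial\hat M_{\rm thick}}\Vert\nabla g\Vert^2/\int_{\hat M_{\rm thick}}g^2\le C\lambda_k$ on the whole span $E$ of the first $k+1$ Neumann eigenfunctions, via the divergence-theorem trace inequality on the collar together with the Reilly identity (using ${\rm Ric}=-2g$, the uniform bound on the second fundamental form of $\partial\hat M_{\rm thick}$, bounded mean curvature of the nearby level tori, and $\lambda_k<1$ so that $\int(\Delta g)^2\le\int\Vert\nabla g\Vert^2$); I checked that the absorption closes and does give $B\le CG\le C\lambda_k\int g^2$. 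What this buys is a cleaner endgame: the Rayleigh bound holds simultaneously for every element of the $(k+1)$-dimensional space of extensions, so min-max applies directly and the projection argument becomes unnecessary; the price is the extra regularity and geometry input (eigenfunctions $H^2$ up to the boundary, $\partial\hat M_{\rm thick}$ smooth with uniformly bounded second fundamental form), which the paper's slicing trick avoids and which here must be secured by the same implicit smoothing of $\partial\hat M_{\rm thick}$ that the paper also uses. Two smaller caveats, both shared in spirit with the paper: shrinking the Margulis constant does not by itself force every tube to have radius at least $99$ -- shallow tubes must be absorbed into the thick part (or one notes that tubes of radius below $96$ are swallowed by the $96$-neighborhood), exactly as in the adjustment following (\ref{mincontrol}); and Proposition \ref{tube} is applied to the pushed-in tubes $\hat T_i$, whose boundary tori have injectivity radius of order $e^{-96}\epsilon$ rather than $\epsilon$, so its constants must be rechecked in that setting (the paper does the same when extending from $\partial V$), and as in Theorem \ref{thm2} one should read $\log({\rm vol}+3)$ to exclude negative logarithms and to absorb your $1+O(1)$ prefactor into the stated $3+q_4\log{\rm vol}(M_{\rm thin})$.
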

\begin{proof} 
Let $M$ be a finite volume oriented hyperbolic $3$-manifold. 
Denote by
$\hat M_{\rm thick}$ the tubular neighborhood of radius $96$ 
of the thick part
$M_{\rm thick}$ of $M$ for some choice of Margulis constant. 

Let $k\geq 1$ be such that $\lambda_k(\hat M_{\rm thick})=\lambda_k<1/96$
and let 
$f:\hat M_{\rm thick}\to \mathbb{R}$ be an eigenfunction
for the eigenvalue $\lambda_k$, with 
Neumann boundary conditions. 
Then $f$ is a smooth function on $\hat M_{\rm thick}$ 
which solves the Laplace equation 
\[\Delta(f)+
\lambda_k f=0.\]

Our goal is to extend the function $f$ on $\hat M_{\rm thick}$ 
to a function $F$ on $M$ which is contained
in the Sobolev space ${\cal H}(M)$ of square integrable
functions on $M$, with square integrable weak derivative, in 
such a way that the Rayleigh quotient of $F$ is controlled
by the Rayleigh quotient of $f$ and hence by $\lambda_k$. 



The tubular neighborhood $N$ of 
$\partial \hat M_{\rm thick}$ in $\hat M_{\rm thick}$ of radius $1$
is diffeomorphic 
to $\partial \hat M_{\rm thick}\times [0,1]$,
where the real parameter is the distance $\rho$
from the boundary 
$\partial \hat M_{\rm thick}$ of $\hat M_{\rm thick}$.  
The metric on $N$ is a warped product metric.
By Corollary \ref{volume2}, we have
\begin{equation}\label{corollary36}
\int_Nf^2\leq \frac{1}{32}\int_{\hat M_{\rm thick}}f^2.
\end{equation}


Let $m>0$ be such that 
$\int_N\Vert \nabla f\Vert^2-
m\lambda_k \int_Nf^2=0$. Then we can find a number
$\delta\in [0,1]$ so that
\begin{equation}\label{delta}
\int_{\partial \hat M_{\rm thick}\times \{\delta\}}
\Vert \nabla f\Vert^2
\leq m\lambda_k\int_{\partial \hat M_{\rm thick}\times \{\delta\}}f^2.
\end{equation}

Let $V=\hat M_{\rm thick}-\partial \hat M_{\rm thick}\times [0,\delta]$.
Then $V$ is a smooth submanifold of $\hat M_{\rm thick}$ with 
boundary $\partial V=
\partial \hat M_{\rm thick}\times \{\delta\}$.
Moreover, $M-V$ is a union of Margulis tubes and cusps as before.
Extend the restriction of $f$ to $\partial V$ as in 
Proposition \ref{tube}. This yields a function 
$F:M\to (0,\infty)$ which is continuous, smooth away from 
$\partial V$, with smooth restriction to $\partial V$.
Furthermore, $F$ is square integrable, and its 
derivative (which exists in the strong sense away from the
compact hypersurface $\partial V$) is pointwise bounded. 
Thus $F$ is contained in the Sobolev space ${\cal H}(M)$ of 
square integrable functions with square integrable 
weak derivative.

The Rayleigh quotient of $F$ can be estimated as follows. 
Let \[{\cal T}=(M-\hat M_{\rm thick})\cup N\supset M-V.\] 
Using the second part of Proposition \ref{tube}, the estimate
(\ref{delta}) and decomposing
integrals, we have 
\begin{align}\label{formula1}
\int_{\cal T}\Vert \nabla F\Vert^2&\leq \int_{{\cal T}\cap V}\Vert \nabla f\Vert^2 +
 m\lambda_k q_2(\log {\rm vol}({\cal T}))\int_{{\cal T}-V}F^2\notag \\ 
   &\leq  m\lambda_k (1+\frac{1}{2}q_2
       \log {\rm vol}({\cal T}))
             \int_N f^2\end{align}
where the last inequality uses the definition of the constant $m>0$. 

Recall that the Rayleigh quotient of the function $f$ on 
$\hat M_{\rm thick}$ equals $\lambda_k$.
We compute
\begin{align}\label{step5}
  \int_M\Vert \nabla F\Vert^2=&
        \int_{M-{\cal T}}\Vert \nabla F\Vert^2+\int_{{\cal T}}
\Vert \nabla F\Vert^2
 \leq \int_{\hat M_{\rm thick}}\Vert \nabla f\Vert^2+
\int_{\cal T}\Vert \nabla F\Vert^2\\
&\leq 
\lambda_k\bigl(\int_{\hat M_{\rm thick}}f^2 +
m(1+\frac{1}{2}q_2\log  {\rm vol}({\cal T}))
                                    \int_Nf^2\bigr)
                                    \notag
\end{align}
where the last inequality follows from (\ref{formula1}). 

As
$\int_N\Vert \nabla f\Vert^2=m\lambda_k \int_N f^2$, we have
$\int_Nf^2\leq \frac{1}{m}\int_{\hat M_{\rm thick}} f^2$. 
Inserting into the estimate (\ref{step5}) 
implies that 
\[\int_{M}\Vert \nabla F\Vert^2\leq 
\lambda_k(2+\frac{1}{2}q_2\log {\rm vol}({\cal T}))\int_{\hat M_{\rm thick}}f^2.\]

To complete the control on the Rayleigh quotient of $F$, we compare
the square norm of $F$ to the square norm of $f$. For convenience, 
extend the eigenfunction $f$ on $\hat M_{\rm thick}$ to all of $M$ by $0$.
As $F=f$ on $V=\hat M_{\rm thick}-\partial \hat M_{\rm thick}\times [0,\delta]$,
using inequality (\ref{corollary36}) we estimate
\begin{equation}\label{normcompare}
  \frac{31}{32}\int_{\hat M_{\rm thick}}f^2 \leq
  \int_{V}f^2
    \leq \int_Vf^2+\int_{M-V}F^2=\int_MF^2.
  \end{equation}
We deduce that 
the Rayleigh 
quotient of $F$ is bounded from above by  
\[(3+q_2(\log {\rm vol}(M_{\rm thin}))\lambda_k(\hat M_{\rm thick}).\]
This is the property of the extension function $F$ we were aiming at.
Note also for later reference that as another consequence of 
Lemma \ref{volumecontrol} and the fact that $F=f$ on $V$, we obtain
\begin{equation}\label{squarenorm2}
\int_M(f-F)^2=\int_{M-V}F^2\leq \frac{1}{2}\int_{\partial 
\hat M_{{\rm thick}\times \{\delta\}}}f^2.
\end{equation}

To summarize, for an eigenfunction $f_k$ on $\hat M_{\rm thick}$
with Neumann boundary conditions and eigenvalue
$\lambda_k<1/96$, we constructed an extension $F_k:M\to \mathbb{R}$
with controlled Rayleigh quotient as predicted in the proposition.

Now let us assume that for each $i\leq k$ we constructed from an 
eigenfunction $f_i$ on $\hat M_{\rm thick}$ with eigenvalue $\lambda_i(\hat M_{\rm thick})$
with the above procedure the function $F_i$. 
Let $E_{k-1}$ be the linear span of the functions
$F_i$ for $i<k$. Assume that the function $F_k$ is normalized 
so that $\int F_k^2=1$. Using the above notations with $\delta_i\in [0,1]$
the number which enters the construction of $F_i$, 
assume furthermore for the moment
that 
\begin{equation}\label{integralestimate}
\int_{\partial \hat M_{\rm thick}\times \{\delta_k\}}f_k^2\leq 
\int_Nf_k^2\leq \frac{1}{32}\int_{\hat M_{\rm thick}}f_k^2.
\end{equation}
Using inequalities (\ref{normcompare}) and (\ref{squarenorm2}), we then
have 
\[\int_{\cal T}F_k^2\leq \frac{3}{32}\int_M F_k^2.\]

The function $F_k$ may not be orthogonal to $E_{k-1}$ for the 
$L^2$-inner product. Denote by $H$ the $L^2$-orthogonal 
projection of $F_k$ to $E_{k-1}$.
Note that $\int_M H^2<1$. 
%
Since $H\in E_{k-1}$ there exists a finite linear combination 
$h$
of eigenfunctions on $\hat M_{\rm thick}$ with
Neumann boundary conditions and eigenvalues $\lambda_i(\hat M_{\rm thick})$ 
for $i<k$ 
which gives rise to $H$
with the above construction. As $\int_M H^2<1$, by construction 
 $\int_{\hat M_{\rm thick}} h^2\leq \frac{32}{31}$, and 
Corollary \ref{volume2} shows that
\[\int_Nh^2\leq \frac{1}{32}\int_{\hat M_{\rm thick}}h^2
\leq \frac{1}{31}.\]

Now $F_k=f_k$ and $H=h$ on $\hat M_{\rm thick}-N$ and furthermore
$\int_{\hat M_{\rm thick}}f_kh=0$ and hence
\begin{align}\label{computation10}
\int_M (F_k-H)^2 &\geq \int_{\hat M_{\rm thick}-N}(F_k-H)^2
=\int_{\hat M_{\rm thick}-N}f_k^2-2f_kh+h^2\\
& \geq \frac{29}{32} -2 \vert \int_N f_kh \vert \geq 
\frac{29}{32}-\frac{2}{31} \geq \frac{26}{32}\notag
\end{align}
by the Cauchy Schwarz inequality. 

By the above construction, there exists a 
universal constant $c>0$ so that 
\[\int_M\Vert \nabla F_k\Vert^2\leq \lambda_k(3+
c\log {\rm vol}(M_{\rm thin}))\int_{\hat M_{\rm thick}} F_k^2\] 
and the same holds true for $H$, with $\lambda_k$ replaced by
$\lambda_{k-1}$. But this implies that 
the Rayleigh quotient of $F_k-H$ is bounded
from above by a fixed multiple of $\lambda_k(M_{\rm thick})(1+\log{\rm vol}(M_{\rm thin}))$.
As $k$ with $\lambda_k<1/96$ was arbitrary, 
the proposition now follows from Rayleigh`s principle
(see \cite{C84} for details) provided that we can assure that
inequality (\ref{integralestimate}) holds true for all $k$.

%
%


The final step of this proof consists in modifying the construction of the function $F$
from an eigenfunction $f$ on $\hat M_{\rm thick}$ with Neumann boundary conditions
so that $F$ fulfills inequality (\ref{integralestimate}).

To this end recall from the second part of Lemma \ref{volumecontrol} 
that 
\[\int _{\partial \hat M_{\rm thick}\times \{0\}}f^2\leq \int_Nf^2.\]

Let $\beta:[0,\delta]\to 
\mathbb{R}$ be such that the function $u:\partial \hat M_{\rm thick}\times [0,\delta]=W
\to (0,\infty)$ defined by $u(x,s)=e^{\beta(s)}f(x,\delta)$ satisfies 
\[\int_{\partial \hat M_{\rm thick}\times \{s\}}u^2=
\int_{\partial \hat M_{\rm thick}\times \{s\}}f^2 \text{ for all }s\in [0,\delta].\]
Clearly we have $\int_Wu^2=\int_Wf^2$. 

Let $\nu$ be the vector field on $W$ which equals the 
exterior normal field of the hypersurfaces $\hat M_{\rm thick}\times \{s\}$, defining the 
orientation of the interval. We claim that 
\begin{equation}\label{w}
\int_W \nu(u)^2\leq \int_W\nu(f)^2.\end{equation}

To show the claim we evoke formula (\ref{areaderi}) from the proof of
Lemma \ref{volumecontrol} which gives

\[\frac{d}{dt}\int_{\partial \hat M_{\rm thick}\times \{t\}}f^2=
c(t)\int_{\partial \hat M_{\rm thick}\times \{t\}}f^2+2\int_{\partial \hat M_{\rm thick}\times \{t\}}f\nu(f).\]
By the definition of the function $u$, for all $t\in [0,\delta]$ we therefore have
\[\int_{\partial \hat M_{\rm thick}\times \{t\}}f\nu(f)=\int_{\partial \hat M_{\rm thick}\times \{t\}}u\nu(u).\]

But $\nu(u)(x,t)=\beta^\prime(t)u(x,t)$ and hence
\[\int_{\partial \hat M_{\rm thick}\times \{t\}}u\nu(u)=\beta^\prime(t) 
\int_{\partial \hat M_{\rm thick}\times \{t\}}u^2,\]
furthermore 
\begin{align}\label{estimate10}
\int_{\partial \hat M_{\rm thick}\times \{t\}}\nu(u)^2&=
\vert \beta^\prime(t)\vert^2\int_{\partial \hat M_{\rm thick}\times \{t\}}u^2\notag\\
&=(\int_{\partial \hat M_{\rm thick}\times \{t\}}u\nu(u))^2/
\int_{\partial \hat M_{\rm thick}\times \{t\}}u^2.\end{align}

Observe that the last line in equation (\ref{estimate10})
coincides with the corresponding 
expression for the function $f$. By the Schwarz' inequality, 
\[\vert \int_{\partial \hat M_{\rm thick}\times \{t\}}
  f\nu(f)\vert \leq
  (\int_{\partial \hat M_{\rm thick}\times \{t\}} f^2)^{1/2}
  (\int_{\partial \hat M_{\rm thick}\times \{t\}} \nu(f)^2)^{1/2}.\]
On the other hand, by (\ref{estimate10}), for the
function $u$ equality holds in this inequality. This yields 
$\int_W \nu(u)^2\leq \int_W \nu(f)^2$ as claimed. 

Following the discussion in the proof of Proposition \ref{tube}, for 
$s,t\in [0,\delta]$ the radial 
projection of the hypersurface
$\partial \hat M_{\rm thick}\times \{s\}$ onto the 
hypersurface 
$\partial \hat M_{\rm thick}\times \{t\}$ is bilipschitz, with uniformly bounded 
bilipschitz constant not depending on $M$. 
This implies that there is a universal constant
$c>0$ such that for each $s\in [0,\delta]$, 
the Rayleigh quotient
of the restriction of $u$ to
$\partial \hat M_{\rm thick}\times \{s\}$ is not bigger than
$c$ times the Rayleigh quotient of the restriction of $u$ to 
$\partial \hat M_{\rm thick}\times \{\delta\}$.
Together with the estimate (\ref{w})
on normal derivatives and the choice of
$\delta$, we obtain 
\[\int_{\partial \hat M_{\rm thick}\times [0,\delta]}
  \Vert \nabla u\Vert^2
  \leq cm\lambda_k\int_{\partial \hat M_{\rm thick}\times
    [0,\delta]}f^2
  +\int_{\partial \hat M_{\rm thick}\times [0,\delta]}
  \Vert \nabla f\Vert^2.\]

Use Proposition \ref{tube} to 
extend the restriction of the
function $u$ to $\hat M_{\rm thick}\times \{0\}$ to 
the complement of 
$\hat M_{\rm thick}$ in $M$
(which consists of a collection of Margulis tubes and cusps).
It now follows from the beginning of this proof that the
resulting function $F$ has all properties
predicted in
the proposition.

To provide more details of this estimate, let 
again ${\cal T}=M-\hat M_{\rm thick}\cup N$. 
Then $f=F$ on $M-{\cal T}$, and by Proposition \ref{tube},
Lemma \ref{volumecontrol}
and inequality (\ref{corollary36}), 
\begin{equation}\label{estimate12}
  \int_{\cal T}F^2\leq \frac{3}{2}\int_Nf^2\leq
\frac{3}{64}\int_{\hat M_{\rm thick}}f^2. \end{equation}
This implies as in the estimate (\ref{normcompare}) that 
\begin{equation}\label{comparisoncontrol2}
\int_Mf^2\leq \int_MF^2\leq \frac{67}{64}\int_Mf^2.
\end{equation}
On the other hand, using the estimate (\ref{delta})
and the construction of $F$, 
\[\int_{{\cal T}}\Vert \nabla F\Vert^2\leq
  \lambda_kq_2m(\log{\rm vol}({\cal T})+c)\int_{N}f^2\]
which yields the required estimate on Rayleigh quotients as above.
\end{proof}

\begin{remark} For a compact 
hyperbolic manifold $M$ of dimension $n\geq 4$, 
Burger and Schroeder 
\cite{BS87} proved the upper bound
\[\lambda_1(M)\leq \frac{\alpha_n+\beta_n\log{\rm vol}(M)}{{\rm diam}(M)}\]
with constants $\alpha_n,\beta_n$ depending only on $n$.
As hyperbolic $3$-manifolds with cusps can be Dehn-filled, with 
fixed volume bound,  this result is in general false for 
hyperbolic $3$-manifolds.
\end{remark}


\bigskip

\noindent
Universit\"at Bonn, Endenicher Allee 60, 53115 Bonn, Germany\\
e-mail: ursula@math.uni-bonn.de

\end{document}